\numberwithin{equation}{section}
\def\cM{{\mathcal M}}
\def\cN{{\mathcal N}}
\def\bG{{\mathbf G}}
\def\bE{{\mathbf E}}
\def\bF{{\mathbf F}}
\def\bK{{\mathbf K}}
\def\bT{{\mathbf T}}
\def\bF{{\mathbf F}}
\def\bJ{{\mathbf J}}
\def\bU{{\mathbf U}}
\def\bV{{\mathbf V}}
\def\bW{{\mathbf W}}
\def\bU{{\mathbf U}}
\def\bN{{\mathbf N}}
\def\bM{{\mathbf M}}
\def\be{{\mathbf e}}
\def\bw{{\mathbf w}}
\def\GL{{\mathbf G}{\mathbf L}}
\def\SL{{\mathbf S}{\mathbf L}}
\def\gl{{\mathbf g}{\mathbf l}}
\def\ZZ{{\mathbb Z}}
\def\RR{{\mathbb R}}
\def\CC{{\mathbb C}}
\newtheorem{prop}{Proposition}[section]
\newtheorem{theo}[prop]{Theorem}
\newtheorem{lemm}[prop]{Lemma}
\newtheorem{coro}[prop]{Corollary}
\newtheorem{rema}[prop]{Remark}
\newtheorem{exam}[prop]{Example}
\newtheorem{defi}[prop]{Definition}
\newtheorem{conj}[prop]{Conjecture}
\def\begeq{\begin{equation}}
\def\endeq{\end{equation}}
\def\bN{{\mathbf N}}
\title{Stability of pairs}
\author{Gang Tian\thanks{Supported partially by
a NSF grant}\\Beijing University and Princeton University}
\date{}
\begin{document}

\maketitle

\noindent
{\bf Abstract}: This is an expository note based on S. Paul's works on the stability of pairs (\cite{paul12a}, \cite{paul12b}, \cite{paul13}, \cite{paul08}).

\tableofcontents
\section{Introduction}

In this note, we discuss the stability of pairs and its related topics, mostly due to S. Paul.
\cite{paul12a}, \cite{paul12b} and \cite{paul13}, motivated by his study of the K-stability, he introduced the notion of the stability of pairs
and reformulated the K-stability, which I introduced in the middle of 90s. This formulation fits better with the Geometric Invariant Theory
and enables us to extend the arguments from the Geometric Invariant Theory to proving theorems on the K-stability we expected, such as,
an extension of the Hilbert-Mumford criterion. As a consequence, we provide detailed arguments for an approach suggested in \cite{tian10}
and used in \cite{tian12} as an alternative proof for the existence of K\"ahler-Einstein metrics on K-stable Fano manifolds assuming
the partial $C^0$-estimate. In the end, we will propose a question on the moduli of semistable pairs.

I would like to thank S. Paul for those discussions
on stability of pairs in the summer of 2012. During those discussions, he showed me his ideas and explained his reasoning. They are
very helpful and made it clear to me most results of this note. 

\section{Stability of pairs}
In this section, we recall basic definitions.
Let $\bG$ be one of the classical subgroups of $\GL(N+1,\CC)$, for example, take $\bG\,=\,\SL(N+1,\CC)$.
Let $\bV$ be a rational representation of $\bG$ \footnote{All representations in this note are finite dimensional and complex.}. The rationality means
that for all $\alpha\in \bV^{\vee}$ (dual space) and $v\in \bV\setminus \{0\}$ the {matrix coefficient} $\varphi_{\alpha , v}$
is a {regular function} on $\bG$, that is, $\varphi_{\alpha , v}\in \CC[\bG]$, where
\begin{equation}
\varphi_{\alpha , v}: \,\bG\,\mapsto  \,\CC,~~~~ \varphi_{\alpha , v}(\sigma)\,=\,\alpha(\rho(\sigma) v).
\end{equation}

For any $v\in \bV\setminus\{0\}$,  we let $[v]$ be the line in $\mathbb{P}(\bV)$ corresponding to $v$ and $\bG [v]$
be orbit of $[v]$ in $\mathbb{P}(\bV)$. Given a pair  $v\in\bV\setminus\{0\}$ and $w\in \bW\setminus\{0\}$, we have orbits
$\bG [v,w] \,\subset \,\mathbb{P}(\bV\oplus\bW)$ and $\bG [v,0]\,\subset\, \mathbb{P}(\bV\oplus\{0\})\,\subset \,\mathbb{P}(\bV\oplus\bW)$.
We will denote their closures by $\overline{\bG [v,w]}$ and $\overline{\bG [v,0]}$.

Following \cite{paul12a} and \cite{paul12b} (also see \cite{paul13}, we have the following.\footnote{Our definition for stable pairs differs from that in
\cite{paul13}. See Section 4 how they are related.}

\begin{defi}\label{defi:pair}
We say the pair $(v,w)$ is \emph{semistable} if
$$\overline{\bG [v,w]}\,\cap\,\overline{\bG [v,0]}\,=\,\emptyset.$$
We say $(v,w)$ is \emph{stable} if the stabilizer of $[v,w]$ in $\bG$ is finite and
$$\overline{\bG [v,w]}\,\backslash \,\bG [v,w]\,\subset\, \mathbb{P}(\{0\}\oplus\bW)\,\subset\, \mathbb{P}(\bV\oplus\bW).$$
\end{defi}

\begin{exam}\label{exam:hmss}
Let $\bV=\CC, v=1$ be the trivial 1-dimensional representation and $\bW$ be any rational representation of $\bG$.
Then $(1,w)$ is semistable if and only if $0$ is not in the closure of the affine orbit $\bG w$. Furthermore,
$(1,w)$ is stable if and only if the stabilizer of $w$ is finite and the orbit $\bG w$ is closed in $\bW$.
In other words, $w$ is semistable or stable in the usual sense of Geometric Invariant Theory.
\end{exam}

This example shows that Definition \ref{defi:pair} generalizes the notion of stability in classical Geometric Invariant Theory. Next we also extend the numerical
criterion, i.e., the Hilbert-Mumford criterion, to the case of pairs. First we fix some notations.

Let $\bT$ be a maximal algebraic torus of $\bG$. Let $\bM_{\ZZ}\,=\,\bM_{\ZZ}(\bT)$ denote the {character lattice} of $\bT$ defined by
\begin{equation}\label{eq:char-lattice}
\bM_{\ZZ}\,= \,\mbox{Hom}_{\ZZ}(\bT,\CC^*) .
\end{equation}
Its dual lattice is denoted by $\bN_{\ZZ}$. Each $ u\in \bN_{\ZZ}$ corresponds to an algebraic one parameter subgroup
$\lambda^u$ of $\bT$. More explicitly, the correspondence is given by
$$ m(\lambda^u(t))\,=\,t^{(u , m)},~~~~\forall t\in \bT,$$
where $ (\cdot \ , \ \cdot)$ is the standard pairing: $\bN_{\ZZ}\times\bM_{\ZZ}\,\mapsto\, \ZZ $.
As usual, we denote by associated real vector spaces
$$\bM_{\RR}\,=\, \bM_{\ZZ}\otimes_{\ZZ}\mathbb{R},~~~\bN_{\mathbb{R}}\,=\,\bN_{\ZZ}\otimes_{\ZZ}\mathbb{R}.$$

Since $\bV$ is rational, it decomposes under the action of $\bT$ into \emph{weight spaces}
\begin{equation}
\label{eq:decomp}
\bV\,=\,\bigoplus_{a\in {A}}\bV_{a},~~~{\rm where}~~{\bV}_{a}\,=\,\{v\in \bV \, |\, t\cdot v\,=\,a(t)\, v \ , \, t \in \bT\}.
\end{equation}
Here $A$ denotes the {support} of $\bV$, i.e.,
$$A\,=\, \{a \in \bM_{\ZZ}\ | \ \bV_{a}\neq 0\}.$$
Given $v\in \bV \setminus \{0\}$ , we denote by $v_a$ the projection of $v$ into $\bV_{a}$ and by $A(v)$ its support:
\begin{equation}\label{eq:support-v}
A(v)\,=\, \{\,a\,\in\, A\, | \, v_{a}\neq 0\,\}.
\end{equation}

\begin{defi}\label{defi:weight}
Let $\bT$ be any maximal torus in $\bG$ and $v\in \bV\setminus\{0\}$. We define the {\bf weight polytope} $\mathcal{N}(v)$ of $v$ to be the convex hull of $A(v)$ in
$\bM_{\RR}$. Furthermore, we define the {\bf weight} $w_{\lambda}(v)$ of $\lambda \in \bN_\ZZ$ to be the integer
\begin{equation}
\label{eq:weight}
w_{\lambda}(v)\,= \,\min_{x\in \cN(v) }  u (x)\,=\, \min_{ a \in A(v)}  (a,u ), ~~{\rm where}~\lambda \sim u\,\in \,\bN_{\ZZ} \ .
\end{equation}
\end{defi}
Observe that the weight of $\lambda$ can be characterized as the unique integer $w_{\lambda}(v)$ such that there is a non-zero limit $v_0$ in $\bV$:
\begin{equation}\label{eq:weight-2}
\lim_{\alpha\to 0}\alpha^{-w_{\lambda}(v)}\lambda(\alpha) v\,=\,v_0 \,\not=\,0.
\end{equation}

\begin{defi}\label{numerical}
Let $\bV$ and $\bW$ be two $\bG$-representations and $v\in\bV\setminus\{0\}, \ w\in \bW\setminus\{0\}$. We say $(v,w)$
\emph{numerically semistable} if $w_{\lambda}(w)\,\leq\, w_{\lambda}(v)$ for all one parameter subgroups $\lambda$ of $\bG$.
We say $(v,w)$
\emph{numerically stable} if $w_{\lambda}(w)\,<\, w_{\lambda}(v)$ for all one parameter subgroups $\lambda$ of $\bG$.
\end{defi}

\begin{exam}\label{exam:sl2}
Let $\bV_e$ and $\bV_d$ be irreducible $SL(2,\mathbb{C})$-representations with highest weights $e , d \in\mathbb{N}$.
These are isomorphic to spaces of homogeneous polynomials in two variables of degree $e$ and $d$. Let $f$ and $g$ be two such polynomials in $\bV_e\setminus\{0\}$ and $\bV_d\setminus\{0\}$
respectively. Then the pair $(f,g)$ is numerically semistable if and only if
\begin{equation}\label{d-e/2}
e\,\le\, d ~~{\rm and}~~\forall p\in \mathbb{P}^1,~ \mbox{ord}_p(g)\,-\,\mbox{ord}_p(f)\,\le\, \frac{d-e}{2}.
\end{equation}
\end{exam}

\section{Hilbert-Mumford-Paul criterion}
In this section, we prove a theorem which is essentially due to S. Paul\footnote{Actually, Paul proved only the semi-stable part of this theorem and an analogous result in Section 4 for the K-stability. However, his arguments work for the stable case, too.}
and extends the Hilbert-Mumford criterion in the Geometric Invariant Theory to pairs.

\begin{theo}\label{th:HMP}
Let $\bV$ and $\bW$ be two $\bG$-representations and $v\in\bV\setminus\{0\}, \ w\in \bW\setminus\{0\}$. Then
$(v,w)$ is stable (resp. semistable) if and only if it is numerically stable (resp. numerically semistable).
\end{theo}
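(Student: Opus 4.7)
The plan is to reduce Theorem \ref{th:HMP} to the classical Hilbert-Mumford criterion by reformulating the projective separation condition as an affine one. The key translation is that $\overline{\bG [v,w]} \cap \overline{\bG [v,0]} = \emptyset$ in $\mathbb{P}(\bV \oplus \bW)$ is equivalent to saying that the orbit closures $\overline{(\bG \times \CC^*)(v,w)}$ and $\overline{(\bG \times \CC^*)(v,0)}$ in the affine space $\bV \oplus \bW$ (where $\CC^*$ acts by scalar multiplication) meet only at the origin.

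The easy direction (\emph{(semi)stability $\Rightarrow$ numerical (semi)stability}) proceeds by contraposition and explicit limit computation. Given a 1-PS $\lambda$ of $\bG$ with $w_{\lambda}(w) > w_{\lambda}(v)$, expanding $v$ and $w$ into $\lambda$-weight spaces yields
\[
\lim_{t\to 0} t^{-w_{\lambda}(v)}\lambda(t)(v,w) = (\hat v, 0) = \lim_{t\to 0} t^{-w_{\lambda}(v)}\lambda(t)(v,0),
\]
where $\hat v\neq 0$ is the sum of weight-$w_{\lambda}(v)$ components of $v$. Thus $[\hat v, 0]$ lies in both $\overline{\bG [v,w]}$ and $\overline{\bG [v,0]}$, contradicting semistability. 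The analogous computation when $w_{\lambda}(w) = w_{\lambda}(v)$ produces a projective limit $[\hat v, \hat w]$ with nonzero $\bV$-component, which either violates the closure condition in Definition \ref{defi:pair} or forces a 1-PS in the stabilizer of $[v,w]$, contradicting stability.

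For the converse, I invoke the separation form of the Hilbert-Mumford criterion for a reductive group $H$ acting on affine space: if $\overline{Hp} \cap \overline{Hq} \ne \emptyset$, then there exist a 1-PS $\mu$ of $H$ and elements $g_1, g_2 \in H$ with $\lim_{t\to 0}\mu(t) g_1 p = \lim_{t\to 0}\mu(t) g_2 q \ne 0$. Applied to $H = \bG \times \CC^*$, $p = (v,w)$, $q = (v,0)$, and writing $\mu = (\lambda, t^k)$ for a 1-PS $\lambda$ of $\bG$ and $k \in \ZZ$, the vanishing of the $\bW$-component of the common limit (inherited from $q$) forces
\[
k = -w_{\lambda}(g_1 v) = -w_{\lambda}(g_2 v), \qquad w_{\lambda}(g_1 w) > w_{\lambda}(g_1 v).
\]
Since $w_{\lambda}(g x) = w_{g^{-1}\lambda g}(x)$, the conjugate $\lambda' := g_1^{-1}\lambda g_1$ is a 1-PS of $\bG$ satisfying $w_{\lambda'}(w) > w_{\lambda'}(v)$, violating numerical semistability.

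For the stable case, strict inequalities throughout yield the closure condition, while the finite-stabilizer condition follows from the observation that a 1-PS fixing $[v,w]$ would act by a common weight on both $v$ and $w$, giving $w_\lambda(v) = w_\lambda(w)$ and contradicting strict numerical stability; reductivity of $\bG$ ensures that any positive-dimensional algebraic subgroup of the stabilizer contains such a 1-PS. The main obstacle is the careful bookkeeping of the scalar $\CC^*$ factor and of the conjugation by $g_1$ when translating the $\bG \times \CC^*$ affine separation statement back into a pure numerical inequality for 1-PSs of $\bG$.
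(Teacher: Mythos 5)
Your easy direction is fine and agrees with the paper's. The converse, however, rests on a ``separation form of the Hilbert--Mumford criterion'' that is not available in the form you need, and what is missing is precisely the hard point of the theorem. After passing to $H=\bG\times\CC^*$ acting on $\bV\oplus\bW$, every $H$-orbit $Hy=\CC^*\bG y$ contains $0$ in its closure, so $\{0\}$ is the unique closed $H$-orbit in $\overline{Hp}$, in $\overline{Hq}$, and in every nonempty closed $H$-invariant subset of $\bV\oplus\bW$. The classical destabilization theorems (Hilbert--Mumford, Birkes, Kempf, Richardson) only guarantee a one-parameter subgroup whose limit lands in a prescribed closed invariant subset; here any such subset necessarily contains the origin, and nothing forces the common limit to be nonzero. (A toy example showing your quoted statement is false as written: $\CC^*$ acting on $\CC^2$ with weights $\pm 1$, $p=(1,0)$, $q=(0,1)$; the orbit closures meet, but only at $0$, and no common nonzero limit exists.) The assertion that one can choose $\mu$, $g_1$, $g_2$ with a \emph{nonzero} common limit is essentially the affine-cone restatement of the implication ``numerically semistable $\Rightarrow$ semistable'' itself, so invoking it is circular. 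This is exactly why the paper does not pass to the affine cone: it works inside $\bE=\mathbb{P}(\bV\oplus\bW)\setminus\mathbb{P}(\{0\}\oplus\bW)$, uses the Cartan decomposition $\bG=\bK\bT\bK$ together with a compactness/partition-of-unity argument to place a limit point $p$ and some translate $\bT k[v,w]$ in a common $\bT$-invariant affine chart $\bU_l$ disjoint from $\mathbb{P}(\{0\}\oplus\bW)$, and only then applies Richardson's lemma for torus actions (Lemma \ref{lemm:richardson}), where the target $Y=\bT p$ is a closed invariant set that avoids the degenerate locus; that is the step your outline has no substitute for.

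A secondary point: your finiteness-of-stabilizer argument claims that any positive-dimensional algebraic subgroup of $\stab([v,w])$ contains a one-parameter subgroup because $\bG$ is reductive. Stabilizers of points need not be reductive, and a positive-dimensional unipotent stabilizer contains no nontrivial torus, so this step needs a different justification (the paper's own proof is also silent on this point, so I do not weigh it heavily). Your weight bookkeeping in the displayed formulas and the identity $w_\lambda(gx)=w_{g^{-1}\lambda g}(x)$ are correct; the genuine gap is the existence of the nonzero common limit.
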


\begin{proof}
We will prove only the stable case. The semistable case can be done in an identical and slightly simpler way.

First we prove the easy direction: Assume that $(v,w)$ is stable, we want to prove it is numerically stable.
Let $\lambda$ be any one parameter subgroup of $\bG$. Then by \eqref{eq:weight-2}, we have
$$\lim_{\alpha\to 0} \,\lambda(\alpha) [v,w]\,=\,\lim_{\alpha \to 0} \,\alpha^{-w_{\lambda}(v)} \lambda(\alpha) [v,w] \,=\,[v_0, \lim_{\alpha\to 0}
\alpha^{w_{\lambda}(w)-w_{\lambda}(v)} w_0].$$
By the stability, this limit should lie in $\mathbb{P}(\{0\}\oplus\bW)$, so $w_{\lambda}(w)\,< \,w_{\lambda}(v)$, consequently,
$(v,w)$ is numerically stable.

Now we will prove the converse by contradiction. Assume that $(v,w)$ is numerically stable but not stable.
It follows from the definition that there is $p$ outside $\mathbb{P}(\{0\}\oplus\bW)$ and in $\overline{\bG [v,w]}\,\backslash\, \bG [v,w]$.

Let $\bT$ be a maximal algebraic torus of $\bG$, then we have the Cartan decomposition:
$\bG\,=\,\bK \bT \bK$, where $\bK$ is a maximal compact subgroup of $\bG$.

The orbit $\bT p$ is contained in $\bE$ and its closure $\overline{\bT p} $ in $\bE$
contains a closed orbit, where
$$\bE\,=\, \mathbb{P}(\bV \oplus\bW)\backslash \mathbb{P}(\{0\}\oplus\bW).$$

Let $k_i, k_i'\in \bK$ and $t_i\in \bT$ with $\lim k_i t_i k_i' [v,w]\,=\,p$, by taking a subsequence if necessary, we may assume
that $k_i$ converge to $k\in \bK$, then
$$\lim_{i\to\infty} t_i k_i' [v,w] \,=\, k p \in \bE.$$
Therefore, by replacing $p$ by $kp$, we may assume
that $p$ is in the closure of $\bT \bK [v,w]$. Without loss of generality, we may further assume that $\bT p$ is closed in $\bE$.

\vskip 0.1in
\noindent
{\bf Claim}: {\it There is a $k\in \bK$ such that $\overline{\bT k [v,w]}\,\cap\, \bT p$ is non-empty}.
\begin{proof}
Assume this claim is false, that is, for any $k\in \bK$, $\overline{\bK \bT k [v,w]}\cap \bT p$ is non-empty.
Using \eqref{eq:decomp}, we get a $\bT$-invariant decomposition:
$$
\bV\,=\,\bigoplus_{a\in {A}}\bV_{a},~~~{\rm where}~~{\bV}_{a}\,=\,\{v\in \bV \, |\, t\cdot v\,=\,a(t)\, v \ , \, t \in \bT\}.
$$
Choose a basis $\{\be_\ell\}$ of $\bV$ such that each $\be_\ell$ lies in one of $\bV_a$'s. Then we define
\begin{equation}
\label{eq:hyperplane}
\bU_l\,=\, \mathbb{P}(\bV \oplus\bW)\,\backslash \,\mathbb{P}(\{ \, \sum_{i=1}^d x_i \be_i \,|\,x_i\in \RR,~x_l = 0 \,\}\oplus \bW),
\end{equation}
where $l = 1,\cdots, d=\dim \bV$. Each $\bU_l$
is a $\bT$-invariant affine open subvariety of $\mathbb{P}(\bV\oplus\bW)$ and is the complement of a hyperplane
containing $\mathbb{P}(\{0\}\oplus\bW)$. Clearly, $\bU_1, \cdots , \bU_d$ cover $\bE$.
%and $\bT$-invariant, affine open subvarieties $U_1, \cdots, U_d$, where $d =\dim \bV$. In fact, we can take each $U_i$
%as the complement of a $\bT$-invariant hypersurface
For each $l$, $\bT p$ is either contained in or
disjoint from $ \bU_l$. Similarly, for each $k \in \bK$, $\bT k [v,w]$ is either contained in or disjoint from $\bU_l$.
For each $k \in \bK$, choose an $i(k)$ such that $\bT k [v,w]\subset \bU_{i(k)}$.
Since $\overline{\bT k [v,w]}\cap \bT p \cap \bU_{i(k)}\,=\,\emptyset$, there is a $\bT$-invariant polynomial $f_k$ on $\bU_{i(k)}$ satisfying:
$$f_k \,\equiv\,1~~{\rm on}~\overline{\bT k [v,w]}\cap U_{i(k)}~~{\rm and}~~f_k\,\equiv\,0~~{\rm on}~\bT p \cap \bU_{i(k)}.$$
Note that $\bT p \cap \bU_{i(k)}$ may be empty. Then there is a $r=r(k) > 0$ such that
$$\overline{B_r(k)}[v,w]\,\subset\, U_{i(k)}~~~{\rm and}~~~\overline{B_r(k)}[v,w]\cap f^{-1}_k(0) \,=\,\emptyset.$$
Since $\bK$ is compact, we can find $k_1,\cdots, k_a$ such that $B_{r_1}(k_1),\cdots, B_{r_a}(k_a)$, where $r_j = r(k_j)$, cover $\bK$.
Let $\{\eta_j\}$ be a partition of unit associated to the covering $\{B_{r_j}(k_j)\}$, then we define a function on $\bK$:
$$F(k) \,=\,\sum_{j=1}^a \,\eta_j(k)\,|f_{k_j}(k [v,w] )|.$$
Then $F\,\ge\, c$ for a positive constant $c > 0$. Since $p\in \overline{\bT\bK [v,w]}$, there is a sequence $ \{(t_i, k_i') \}$ in
$\bT\times \bK$
such that $t_ik_i'[v,w]$ converge to $p$, it follows
$$F(k_i')\,=\,\sum_{j=1}^a\, \eta(k_i') \,|f_{k_j}(t_i k_i'[v,w])| \,\rightarrow\, 0.$$
This is a contradiction, so {\bf Claim} is proved.

\end{proof}
The above claim gives a $\bU_l$ which contains both $\bT p$ and $\bT k [v,w]$. But $\bU_l$ can be identified with
the hyperplane of $\bV\oplus\bW$:
$$\{ \, \sum_{i=1, i\not=l}^d x_i \be_i \,|\,x_i\in \RR \,\}.$$
Since $\bT p$ is closed in $\bU_l$, the following lemma implies that there is a one-parameter subgroup $\lambda : \CC^* \mapsto \bG$ such that
$$ \lim_{t\to 0} \lambda(t) k [v, w]\,=\, p'\in \bT p.$$
Then $\bar\lambda (t)= k^{-1} \lambda(t) k[v,w]$ defines a one-parameter subgroup and
$$\lim_{t\to 0} \bar\lambda (t) [v,w]\,=\,k^{-1} p'\in \bE.$$
This contradicts to the assumption that $[v,w]$ is numerically stable.
\end{proof}

\begin{lemm}
\label{lemm:richardson}
Let $\bT$ be an algebraic torus and $\bU$ be a $\bT$-representation. If $x\in \bU$ and if $Y$ is a non-empty $\bT$-invariant closed subset of $\overline{\bT x}\,\backslash\,
\bT x$, then there is a $y\in Y$ and a one-parameter subgroup $\lambda: \CC^* \mapsto \bT$ such that $\lambda(t) x \rightarrow y$ as $t \rightarrow 0$.
\end{lemm}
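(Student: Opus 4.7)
The plan is to reduce to the case when $Y$ is a single closed $\bT$-orbit and then build a one-parameter subgroup explicitly from the weight-polytope data of $x$.

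First, the reduction. By the Noetherian property of closed $\bT$-invariant subsets of $\overline{\bT x}$, $Y$ contains a minimal such subset $Y_0$. For any $y_0 \in Y_0$ the orbit closure $\overline{\bT y_0}$ is a closed $\bT$-invariant subset of $Y_0$, hence equals $Y_0$ by minimality; and the boundary $\overline{\bT y_0} \setminus \bT y_0$ is itself closed and $\bT$-invariant (orbits are locally closed), so by minimality must be empty. Thus $Y_0 = \bT y_0$ is a closed orbit, and it suffices to produce a one-parameter subgroup $\lambda$ with $\lim_{t\to 0}\lambda(t)x \in \bT y_0$.

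Second, the weight analysis. Decompose $\bU = \bigoplus_a \bU_a$ into $\bT$-weight spaces and write $x = \sum_a x_a$, $y_0 = \sum_a y_{0,a}$, with supports $A(x)$ and $A(y_0)$. For any sequence $t_n \in \bT$ with $t_n x \to y_0$, each coordinate $a(t_n)$ converges in $\CC$, to a nonzero limit precisely when $a \in A(y_0)$; thus $A(y_0) \subseteq A(x)$, strictly (otherwise the tuple $(a(t_n))_{a \in A(x)}$ would converge inside the closed subgroup $\image(\bT \to (\CC^*)^{A(x)})$, forcing $y_0 \in \bT x$). Closedness of $\bT y_0$ is equivalent to $0$ lying in the relative interior of $\operatorname{conv}(A(y_0))$. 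The key structural input is that $A(y_0)$ is a \emph{face} of $\cN(x)$; I would prove this by induction on $|A(x)\setminus A(y_0)|$: after quotienting $\bT$ by the ineffective kernel $A(x)^\perp$, the tropical direction $v = \lim \log|t_n|/\|\log|t_n|\| \in \bN_\RR$ exposes a proper face $F' \subsetneq A(x)$ of $\cN(x)$ with $A(y_0) \subseteq F'$, one checks $t_n x_{F'} \to y_0$ where $x_{F'} = \sum_{a\in F'} x_a$, and the inductive hypothesis applied to $x_{F'}$ in $\bigoplus_{a\in F'} \bU_a$ identifies $A(y_0)$ as a face of $\operatorname{conv}(F')$, hence of $\cN(x)$.

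Finally, with $A(y_0)$ a face of the rational polytope $\cN(x)$, pick $u \in \bN_\ZZ$ in the relative interior of its normal cone; then $(a, u)$ is constant on $A(y_0)$ and strictly larger on $A(x) \setminus A(y_0)$. Writing $0 = \sum_{a \in A(y_0)} \mu_a a$ as a convex combination (available because $0$ lies in the affine span of $A(y_0)$) yields $0 = (0, u) = \sum \mu_a (a, u)$, so the common value must be $0$. Consequently $\lim_{t\to 0} \lambda^u(t)x = \sum_{a \in A(y_0)} x_a =: x_{A(y_0)}$, and this limit lies in $\bT y_0$ because in a weight basis the coefficient ratios between $x_{A(y_0)}$ and $y_0$ appear as the inverse of $\lim (a(t_n))_{a \in A(y_0)}$ and therefore lie in the closed subgroup $\image(\bT \to (\CC^*)^{A(y_0)})$, providing a $t \in \bT$ with $t y_0 = x_{A(y_0)}$. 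The main obstacle is the face identification: a generic sequence $t_n x \to y_0$ only exposes a face $F'$ potentially strictly larger than $A(y_0)$, so one must exploit the closedness of $\bT y_0$ repeatedly along the induction to descend to the exact face.
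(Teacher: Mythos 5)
Your proposal is correct in outline but follows a genuinely different route from the paper's. The paper argues directly: writing $x=\sum x_i\be_i$, $y=\sum_{j\ge k}y_j\be_j$ and using that $|a_i(t_\ell)|\to 0$ for $i<k$ while $|a_j(t_\ell)|\to |y_j/x_j|\neq 0$ for $j\ge k$, it shows via the identity $\prod_{i<k}|a_i(t)|^{r_i}=\prod_{j\ge k}|a_j(t)|^{c_j}$ that $0$ does not lie in the convex hull of the images of $\{a_i\}_{i<k}$ in the quotient $\bM_\RR/\mathrm{span}\{a_j\}_{j\ge k}$, then separates and lifts to get a rational functional vanishing on $\mathrm{span}(A(y))$ and positive on $A(x)\setminus A(y)$ in one stroke. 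You instead (i) reduce to a closed orbit $\bT y_0$ by Noetherian minimality, (ii) prove $A(y_0)$ is a face of $\cN(x)$ by a tropical-limit induction, and (iii) invoke the criterion that $\bT y_0$ is closed iff $0\in\operatorname{relint}\operatorname{conv}(A(y_0))$ to force the exposing functional to vanish on the face. This works, and step (iii) is exactly what substitutes for the paper's quotient construction, but it outsources the real content to two nontrivial standard facts (the closed-orbit criterion and the face identification, the latter only sketched and acknowledged by you as the main obstacle, with the degenerate case $y_0=0$, where $A(y_0)=\emptyset$, left unaddressed); the paper's single separation argument avoids both. One slip: the convex combination $0=\sum\mu_a a$ needs $0\in\operatorname{conv}(A(y_0))$, not merely the affine span as you parenthetically state --- though your step (iii) does supply the stronger fact. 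On the plus side, you correctly observe that the limit $\lim_{t\to 0}\lambda(t)x$ is $\sum_{a\in A(y_0)}x_a$ rather than $y_0$ itself, and that it lies in $\bT y_0$ because the image of $\bT\to(\CC^*)^{A(y_0)}$ is a closed subgroup; the paper's displayed equation \eqref{eq:lambda} elides this point.
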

\begin{proof} This is due to Richardson. For the readers' convenience, we include a proof following \cite{paul13}.
Similar to \eqref{eq:decomp}, we have a decomposition:
$$\bU\,=\, \sum_{a\in A} \,\bU_a,~~~{\rm where}~~{\bU}_{a}\,=\,\{\,u\in \bU \, |\, t\cdot u\,=\,a(t)\, u \ , \, t \in \bT\,\}.$$
We fix a basis $\{\be_i\}_{1\le i\le d}$ of $\bU$ such that $ t \cdot \be_i = a_i (t) \be_i$ for some $a_i\in A$.\footnote{For $i\not= j$, we may still have $a_i\,=\,a_j$.}  Suppose that there are $t_\ell\in \bT$ such that $t_\ell\cdot x$ converge
to a $y \in Y$ as $\ell$ goes to $\infty$.

By rearranging the indices, we may write
$$x\,=\,\sum_{i=1}^d x_i \,\be_i,~~~y\,=\,\sum_{j=k}^d y_j \,\be_j ,$$
where $1\le k \le d$, $x_i\not=0$ and $y_j\not=0$.
Our assumption implies that $a_i (t_\ell) x_i$ converge to $0$ for $i < k$ and converge to $y_i $ for $i \ge k$.
Since $x_i\not= 0$ and $y_j\not=0$, we get
$$\lim_{\ell\to 0}  a_i (t_\ell)\,=\,0~~\forall \,i \,<\,k~~{\rm and}~~~\lim_{\ell\to 0}  a_j(t_\ell)\,=\,\frac{y_j}{x_j}\,\not=\,0~~\forall\, j\,\ge \,k.$$

Consider the quotient
$$ \pi:  \bM_\RR\,\mapsto\, W\,= \,\bM_\RR / \bM_y,~~~{\rm where}~\bM_y \,=\,\bigoplus_{j=k}^d \,\RR\cdot a_j.$$
Denote by $\Delta$ he convex hull (in $W$) of $a_i$ for $i=1,\cdots, k-1$.

We claim that $0\notin \Delta$. This can be shown as follows: If the claim is false,
then there are real constants $r_1,\cdots, r_{ k-1}\,\ge\,0$ such that
$${\rm some}~r_{i}\,>\, 0,~~~ \sum_{i=1}^{k-1} r_i\,a_i \,=\,0~~{\rm mod}~\bM_y,$$
hence, there are $c_k,\cdots,c_d$ such that
$$
\sum_{i=1}^{k-1} r_i\,a_i\,=\,\sum_{j=k}^d c_j\,a_j .
$$
Hence, for all $t\in \bT$, we have
\begin{equation}\label{product}
\prod_{i=1}^{k-1} \, |a_i(t)|^{r_i}\,=\,\prod_{j=k}^d |a_j(t)|^{c_j}.
\end{equation}
By plugging in the sequence $\{t_\ell\}$, we get a contradiction since as the left side of (\ref{product})
tends to zero while the right side does not. This proves our claim.

Using this claim and the Hyperplane Separation Theorem, one can get a linear functional $f: W \mapsto \RR$ such that
$f(\pi(a_i))\,>\,0$ for $i < k$.
Furthermore, one can choose this to be \emph{rational}.
Next we lift $f$ to $$F\,=\,f\cdot \pi: \bM_\RR \mapsto \RR.$$
Then $F$ is a \emph{rational} linear functional on $\bM_\RR$. Multiplying it by an integer, we may even assume that $F$ is integer-valued.
Therefore, it induces a one parameter subgroup $\lambda: \CC^*\mapsto \bT$ satisfying:
\begin{equation}
\label{eq:lambda}
\lim_{\ell \to 0}\,\lambda (t) \,x\,=\, \sum_{j=k}^d y_j \be_j \,=\, y.
\end{equation}
The lemma is proved.

\end{proof}

\section{A Theorem of Kempf-Ness type}

In this section, we show that the stability (resp. semistability) of pairs is equivalent to the properness (resp. lower bound) of a Kempf-Ness type functional.
As before, $\bV$ and $\bW$ are two finite dimensional complex rational representations of $\bG$ together with two non-zero $v\in \bV$ and $w\in \bW$. We equip
$\bV$ and $\bW$ with Hermitian norms.

Recall a function on $\bG$ introduced by S. Paul in \cite{paul12a}:
\begin{equation}\label{eq:KN-func}
p_{v,w}(\sigma)\,=\,\log||\sigma (w)||^2\,-\,\log||\sigma(v)||^2.
\end{equation}

The following was proved in \cite{paul12a}.

\begin{lemm}\label{lemm:distance}
\begin{equation}\label{eq:distance}
p_{v,w}(\sigma)\,=\,\log \tan^2 d(\sigma [v,w], \sigma [v,0]),
\end{equation}
where $d(\cdot,\cdot)$ is the distance function of
the Fubini-Study metric on $\mathbb{P}(\bV\oplus\bW)$.
\end{lemm}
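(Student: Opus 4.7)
The plan is to unwind the definition of the Fubini--Study distance on $\mathbb{P}(\bV\oplus\bW)$ and compute directly: the two points $\sigma[v,w]$ and $\sigma[v,0]$ are close enough in structure (they share a $\bV$-component and differ only in the $\bW$-component) that the distance reduces to an elementary trigonometric expression in $\|\sigma(v)\|$ and $\|\sigma(w)\|$.

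Concretely, I would begin by recording the standard formula: for nonzero $u_1,u_2$ in a Hermitian vector space $H$, the Fubini--Study distance in $\mathbb{P}(H)$ between their classes satisfies
\begin{equation*}
\cos d([u_1],[u_2]) \,=\, \frac{|\langle u_1, u_2\rangle|}{\|u_1\|\,\|u_2\|}.
\end{equation*}
Equip $\bV\oplus\bW$ with the orthogonal direct-sum Hermitian structure $\langle (a,b),(c,d)\rangle = \langle a,c\rangle_{\bV}+\langle b,d\rangle_{\bW}$. Applying the formula to $u_1=(\sigma v,\sigma w)$ and $u_2=(\sigma v,0)$, the cross terms in the inner product vanish because of the zero in the second slot, so
\begin{equation*}
\langle u_1,u_2\rangle \,=\, \|\sigma(v)\|^2, \qquad \|u_1\| \,=\, \sqrt{\|\sigma(v)\|^2+\|\sigma(w)\|^2}, \qquad \|u_2\| \,=\, \|\sigma(v)\|.
\end{equation*}

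Substituting yields $\cos d = \|\sigma(v)\|/\sqrt{\|\sigma(v)\|^2+\|\sigma(w)\|^2}$, hence $\sin^2 d = \|\sigma(w)\|^2/(\|\sigma(v)\|^2+\|\sigma(w)\|^2)$, and therefore
\begin{equation*}
\tan^2 d(\sigma[v,w],\sigma[v,0]) \,=\, \frac{\|\sigma(w)\|^2}{\|\sigma(v)\|^2}.
\end{equation*}
Taking logarithms gives $\log\tan^2 d = \log\|\sigma(w)\|^2 - \log\|\sigma(v)\|^2 = p_{v,w}(\sigma)$, which is the desired identity.

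There really is no hard step here: the statement is a direct consequence of the definition of the Fubini--Study metric once one observes that $\sigma[v,w]$ and $\sigma[v,0]$ share the $\bV$-part and the $\bW$-parts are orthogonal in the direct-sum Hermitian structure. The only thing to be careful about is the convention for Fubini--Study distance (the one normalized so that $0\le d\le \pi/2$, which is the setting in which $\cos d = |\langle u_1,u_2\rangle|/(\|u_1\|\,\|u_2\|)$ holds without ambiguity), and the fact that the formula is well defined on projective classes since rescaling $u_1$ or $u_2$ leaves the ratio invariant. In particular, the $\sigma$-equivariance of the computation (the same formula applied before and after acting by $\sigma$) is automatic and requires no separate argument.
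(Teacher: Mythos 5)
Your proposal is correct and follows essentially the same route as the paper: both start from the formula $\cos d([u],[u']) = |(u,u')|/(\|u\|\,\|u'\|)$ for the Fubini--Study distance, apply it to $u=(\sigma v,\sigma w)$ and $u'=(\sigma v,0)$ using the orthogonal direct-sum Hermitian structure, and deduce $\tan d = \|\sigma(w)\|/\|\sigma(v)\|$. No differences worth noting.
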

\begin{proof} As Paul did, we will derive \eqref{eq:distance} from a formula for $d(\cdot,\cdot)$.
The Hermitian norms on $\bV$ and $\bW$ induces a Hermitian inner product $(\cdot,\cdot)$ on $\bU\,=\,\bV\oplus\bW$. We will use $||\cdot||$ to denote the
Hermitian norms on these spaces. Then for any $u,u'\in \bU$,
\begin{equation}
\label{eq:dist-1}
\cos d ([u], [u'])\,=\,  \frac{|( u,u')|}{||u|| \, ||u'||}.
\end{equation}
Now taking $[u] = \sigma [v,w]$ and $[u'] = \sigma [v,0]$, where $\sigma \in \bG$, we obtain
\begin{equation}
\label{eq:dist-2}
\cos d (\sigma [v,w], \sigma [v,0])\,=\,  \frac{||\sigma(v)||}{\sqrt{||\sigma(v)||^2 \,+ \,||\sigma(w)||^2}}.
\end{equation}
It immediately implies
$$\tan d (\sigma [v,w], \sigma [v,0])\,=\, \frac{||\sigma(w)||}{||\sigma(v)||}.$$
Then \eqref{eq:distance} follows easily.
\end{proof}

It follows from this lemma that $p_{v,w}$ is proper on $\bG$ if and only if
$$d(\sigma_i [v,w],\sigma_i [v,0])\,\rightarrow\,\frac{\pi}{2}~\,{\rm whenever}~\sigma_i [v,w] \rightarrow \overline{\bG [v,w]}\,\backslash\, \bG [v,w].$$
Similarly,
$p_{v,w}$ is bounded from below on $\bG$ if and only if there is a constant $c > 0$ such that
$$d(\sigma [v,w],\sigma [v,0])\,\ge \, c ~~{\rm on}~\bG.$$
Therefore, we have

\begin{theo}\label{th:properness}
$p_{v,w}$ is proper (resp. bounded from below) on $\bG$ if and only if $(v,w)$ is stable (resp. semistable).
\end{theo}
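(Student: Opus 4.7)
The plan is to use Lemma~\ref{lemm:distance} to translate each analytic condition on $p_{v,w}$ into an equivalent geometric statement about the Fubini-Study distance $d(\sigma[v,w],\sigma[v,0])$, and then match these with Definition~\ref{defi:pair}. Since $\log\tan^2$ is a monotone bijection from $(0,\pi/2)$ to $\mathbb{R}$, the identity $p_{v,w}(\sigma)=\log\tan^2 d(\sigma[v,w],\sigma[v,0])$ reduces the theorem to two assertions: (a) $d(\sigma[v,w],\sigma[v,0])\ge c$ uniformly on $\bG$ for some $c>0$ if and only if $(v,w)$ is semistable; (b) $d(\sigma_i[v,w],\sigma_i[v,0])\to\pi/2$ for every sequence $\sigma_i$ leaving compact sets of $\bG$ if and only if $(v,w)$ is stable.

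For (a), the forward direction is a compactness argument: if $(v,w)$ is semistable, then $\overline{\bG[v,w]}$ and $\overline{\bG[v,0]}$ are disjoint closed subsets of the compact space $\mathbb{P}(\bV\oplus\bW)$, so their Fubini-Study distance is a positive constant $c$, which forces $d(\sigma[v,w],\sigma[v,0])\ge c$ for every $\sigma\in\bG$. For the converse I would take the contrapositive: assuming $(v,w)$ is not semistable, Theorem~\ref{th:HMP} produces a one-parameter subgroup $\lambda=\lambda^u$ with $w_\lambda(w)>w_\lambda(v)$, and a direct weight-space expansion of $\|\lambda(t)v\|^2$ and $\|\lambda(t)w\|^2$ in an orthonormal weight basis yields
$$p_{v,w}(\lambda(t)) \;=\; 2\bigl(w_\lambda(w)-w_\lambda(v)\bigr)\log|t|\;+\;O(1) \;\longrightarrow\; -\infty$$
as $t\to 0^+$, showing $p_{v,w}$ cannot be bounded from below.

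For (b), assume $(v,w)$ is stable and let $\sigma_i\in\bG$ leave every compact set. Finiteness of the stabilizer makes the orbit map $\bG\to\bG[v,w]$ proper with finite fibers, so after passing to a subsequence $\sigma_i[v,w]$ converges to a point $p\in\overline{\bG[v,w]}\setminus\bG[v,w]\subset\mathbb{P}(\{0\}\oplus\bW)$. Since $\mathbb{P}(\{0\}\oplus\bW)$ and $\mathbb{P}(\bV\oplus\{0\})$ are totally orthogonal projective subspaces at Fubini-Study distance $\pi/2$, and $\sigma_i[v,0]$ always lies in the latter, we get $d(\sigma_i[v,w],\sigma_i[v,0])\to\pi/2$, which is properness. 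For the converse, suppose $p_{v,w}$ is proper but $(v,w)$ is not stable. Either the stabilizer of $[v,w]$ is positive-dimensional, in which case $\sigma v=\mu v$ and $\sigma w=\mu w$ with the same scalar $\mu$ on this subgroup, so $p_{v,w}$ is constant on an unbounded set, contradicting properness; or there exists $p\in\overline{\bG[v,w]}\setminus\bG[v,w]$ lying outside $\mathbb{P}(\{0\}\oplus\bW)$. In the second case I would write $p=[u_1,u_2]$ with $u_1\ne 0$ and pick $\sigma_i$ with $\sigma_i[v,w]\to p$; choosing scalars $\mu_i\in\mathbb{C}^*$ so that $(\mu_i\sigma_i v,\mu_i\sigma_i w)\to(u_1,u_2)$, the ratio $\|\sigma_i w\|/\|\sigma_i v\|$ tends to the finite limit $\|u_2\|/\|u_1\|$, keeping $p_{v,w}(\sigma_i)$ bounded above, while $\sigma_i$ must escape to infinity in $\bG$ (otherwise $p$ would lie in $\bG[v,w]$), again a contradiction.

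I expect the only subtle point to be the "proper implies stable" direction, because one has to simultaneously rule out two distinct obstructions — a positive-dimensional stabilizer and a limit point escaping $\mathbb{P}(\{0\}\oplus\bW)$ — and in the second case one has to be careful to extract from projective convergence $\sigma_i[v,w]\to p$ an honest comparison of the norms $\|\sigma_i v\|$ and $\|\sigma_i w\|$ via an appropriate rescaling. The other three directions are either pure compactness (semistable $\Rightarrow$ bounded below, stable $\Rightarrow$ proper using orthogonality of $\mathbb{P}(\bV)$ and $\mathbb{P}(\bW)$ inside $\mathbb{P}(\bV\oplus\bW)$) or a one-line consequence of Theorem~\ref{th:HMP} combined with the weight-space expansion above.
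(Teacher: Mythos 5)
Your proposal is correct and follows the same basic route as the paper: Lemma \ref{lemm:distance} converts $p_{v,w}$ into $\log\tan^2$ of the Fubini--Study distance $d(\sigma[v,w],\sigma[v,0])$, and the two halves of Definition \ref{defi:pair} are then matched with a uniform positive lower bound on that distance (semistable) and with convergence to $\pi/2$ along degenerating sequences (stable), using that $\mathbb{P}(\bV\oplus\{0\})$ and $\mathbb{P}(\{0\}\oplus\bW)$ are orthogonal, hence at distance $\pi/2$. The paper asserts these equivalences in two sentences; you supply the details, and two of your choices are worth comparing. First, for ``bounded below $\Rightarrow$ semistable'' you route through Theorem \ref{th:HMP} and a weight expansion along a destabilizing one-parameter subgroup. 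This is valid (Theorem \ref{th:HMP} is proved independently in Section 3), but it imports the hard direction of the Hilbert--Mumford--Paul criterion where the elementary inequality \eqref{eq:dist-3} already suffices: if $q\in\overline{\bG[v,w]}\cap\overline{\bG[v,0]}$, choose $\sigma_i[v,w]\to q$ and $\tau_i[v,0]\to q$; then $d(\sigma_i[v,w],\sigma_i[v,0])\le d(\sigma_i[v,w],\tau_i[v,0])\to 0$, so $p_{v,w}(\sigma_i)\to-\infty$. This is exactly what Corollary \ref{coro:infi} packages, and it keeps the theorem independent of the numerical criterion. Second, your treatment of the finite-stabilizer clause of stability --- properness of the orbit map $\bG\to\bG[v,w]$ in one direction, and constancy of $p_{v,w}$ on a positive-dimensional (hence unbounded) stabilizer in the other --- is more careful than the paper, whose stated reformulation of properness only tracks sequences with $\sigma_i[v,w]$ approaching $\overline{\bG[v,w]}\setminus\bG[v,w]$ and so does not visibly account for the stabilizer condition. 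I see no gaps in your argument.
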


Similarly, we have
\begin{theo}\label{th:properness-2}
$p_{v,w}$ is proper (resp. bounded from below) along any one-parameter subgroups of $\bG$ if and only if $(v,w)$ is numerically stable (resp. numerically
semistable).
\end{theo}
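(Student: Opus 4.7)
The plan is to reduce everything to a single asymptotic expansion of $p_{v,w}$ along a one-parameter subgroup, then read off the equivalences directly from the leading coefficient. Any one-parameter subgroup $\lambda : \CC^*\to\bG$ lies in some maximal torus $\bT$ of $\bG$, so we may write $\lambda=\lambda^u$ for a unique $u\in\bN_\ZZ(\bT)$ and decompose $\bV=\bigoplus_a\bV_a$, $\bW=\bigoplus_b\bW_b$ into $\bT$-weight spaces. Writing $v=\sum_a v_a$ and $w=\sum_b w_b$, the definitions give $\lambda(\alpha)v=\sum_a\alpha^{(a,u)}v_a$ and $\lambda(\alpha)w=\sum_b\alpha^{(b,u)}w_b$.

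The main calculation is to extract the leading term of $\|\lambda(\alpha)v\|^2$ as $|\alpha|\to 0$. Expanding
\[
\|\lambda(\alpha)v\|^2=\sum_{a,a'}\alpha^{(a,u)}\bar\alpha^{(a',u)}(v_a,v_{a'}),
\]
the pairs $(a,a')$ that minimize $(a,u)+(a',u)$ are exactly those with $(a,u)=(a',u)=w_\lambda(v)$, and on these the phase factor is $|\alpha|^{2w_\lambda(v)}$. Their contribution equals $|\alpha|^{2w_\lambda(v)}\,\bigl\|\sum_{(a,u)=w_\lambda(v)}v_a\bigr\|^2$, which is strictly positive by the definition of $w_\lambda(v)$. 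Since the weights are integers, the remaining terms are of strictly smaller order, yielding
\[
\log\|\lambda(\alpha)v\|^2 \,=\, 2w_\lambda(v)\log|\alpha|+O(1)\quad\text{as }|\alpha|\to 0,
\]
and the analogous identity for $w$. Subtracting gives the clean asymptotic
\[
p_{v,w}(\lambda(\alpha))\,=\,2\bigl(w_\lambda(w)-w_\lambda(v)\bigr)\log|\alpha|\,+\,O(1)\quad\text{as }|\alpha|\to 0.
\]
Applying the same expansion to the reversed one-parameter subgroup $\lambda^{-u}$ handles the asymptotic as $|\alpha|\to\infty$.

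From this formula the equivalences are immediate. In the stable case: if $(v,w)$ is numerically stable, then $w_\lambda(w)-w_\lambda(v)<0$ for every $\lambda$, so $p_{v,w}(\lambda(\alpha))\to+\infty$ as $|\alpha|\to 0$; applied to $\lambda^{-u}$ (also numerically stable by assumption), we also obtain $p_{v,w}(\lambda(\alpha))\to+\infty$ as $|\alpha|\to\infty$, i.e.\ properness along $\lambda$. Conversely, properness of $p_{v,w}$ along $\lambda$ forces the leading coefficient to be negative, hence $w_\lambda(w)<w_\lambda(v)$. In the semistable case: numerical semistability gives $w_\lambda(w)-w_\lambda(v)\leq 0$, so $p_{v,w}(\lambda(\alpha))$ is bounded below along $\lambda$; conversely a positive leading coefficient would send $p_{v,w}\to-\infty$, ruling out lower-boundedness.

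The only mildly delicate point is the leading-term extraction above, because the fixed Hermitian inner product is not assumed $\bT$-invariant so the weight decomposition need not be orthogonal. This is the step I expect to require the most care, but the integrality of the weights guarantees a genuine gap between the dominant order and the next-order terms, so the strictly positive diagonal contribution $\|\sum_{(a,u)=w_\lambda(v)}v_a\|^2$ cannot be cancelled, and the asymptotic goes through unchanged.
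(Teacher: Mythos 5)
Your proposal is correct. The paper itself gives no argument for this theorem (it is introduced with ``Similarly, we have'' after Theorem \ref{th:properness}); the intended route is to restrict the distance formula of Lemma \ref{lemm:distance}, $\tan d(\sigma[v,w],\sigma[v,0]) = \|\sigma(w)\|/\|\sigma(v)\|$, to a one-parameter subgroup and then invoke the weight-limit characterization \eqref{eq:weight-2}. Your direct expansion of $\|\lambda(\alpha)v\|^2$ is the same computation carried out explicitly without the detour through the Fubini--Study distance, and you correctly identify and resolve the one point the paper glosses over: the Hermitian norm need not make the weight decomposition orthogonal, so the cross terms must be controlled, which your integrality-gap argument does (the minimizing pairs all satisfy $(a,u)=(a',u)=w_\lambda(v)$, their phases cancel, and their sum is $\|v_0\|^2>0$ with $v_0$ the nonzero limit of \eqref{eq:weight-2}). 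Two trivial housekeeping points you may as well note: the quantifier ``all one-parameter subgroups'' in Definition \ref{numerical} must be read as ``all nontrivial ones'' for the stable case to be nonvacuous (and correspondingly properness along the constant subgroup is automatic), and boundedness below on all of $\CC^*$ in the semistable case follows from your two asymptotics at $0$ and $\infty$ together with continuity on a compact annulus. Neither affects the validity of the argument.
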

Observe that for any $\sigma,\tau \in \bG$, we have
\begin{equation}\label{eq:dist-3}
d(\sigma([v,w]) , \sigma([v,0]))\,\leq\, d (\sigma([v,w]) , \tau([v,0])).
\end{equation}
So we have
\begin{coro}\label{coro:infi}
The infimum of the energy $p_{v,w}$ on $\bG$ is given by
\begin{equation}\label{eq:dist-4}
\inf_{\sigma\in \bG}\,p_{v,w}(\sigma)\,=\, \log\tan^2 d(\,\overline{\bG [v,w]}, \overline{\bG [v,0]}\,).
\end{equation}
\end{coro}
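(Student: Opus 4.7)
The plan is to assemble three ingredients already available: Lemma \ref{lemm:distance}, inequality \eqref{eq:dist-3}, and compactness of projective space. By Lemma \ref{lemm:distance}, $p_{v,w}(\sigma)=\log\tan^2 d(\sigma[v,w],\sigma[v,0])$, so the statement is about an infimum of distances rather than of the functional itself.

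The key observation is that \eqref{eq:dist-3} says that, for a fixed $\sigma$, the point $\sigma[v,0]$ already realizes the distance from $\sigma[v,w]$ to the entire orbit $\bG[v,0]$: namely
$$d(\sigma[v,w],\sigma[v,0])\,=\,\inf_{\tau\in\bG} d(\sigma[v,w],\tau[v,0]).$$
This is the heart of the corollary; geometrically it reflects the fact that the closest point on $\mathbb{P}(\bV\oplus\{0\})$ to $[\sigma v,\sigma w]$ in the Fubini–Study metric is its $\bV$-projection $[\sigma v,0]$, and the $\bG$-action preserves $\mathbb{P}(\bV\oplus\{0\})$.

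Using this identity and monotonicity of the function $x\mapsto \log\tan^2 x$ on $(0,\pi/2)$, I would write
$$\inf_{\sigma\in\bG} p_{v,w}(\sigma)\,=\,\log\tan^2\!\Big(\inf_{\sigma\in\bG} d(\sigma[v,w],\sigma[v,0])\Big)\,=\,\log\tan^2\!\Big(\inf_{\sigma,\tau\in\bG} d(\sigma[v,w],\tau[v,0])\Big).$$
The inner infimum is by definition $d(\bG[v,w],\bG[v,0])$, which equals $d(\overline{\bG[v,w]},\overline{\bG[v,0]})$ because both closures are compact subsets of the compact metric space $\mathbb{P}(\bV\oplus\bW)$ and distance to a set agrees with distance to its closure.

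I do not expect a real obstacle: once \eqref{eq:dist-3} is in hand, the argument is a short chain of equalities. The only mild point worth noting is the unstable case, where $\overline{\bG[v,w]}\cap\overline{\bG[v,0]}\neq\emptyset$: then the right-hand side is $\log\tan^2 0=-\infty$, consistent with $p_{v,w}$ failing to be bounded below (by Theorem \ref{th:properness}), so no separate treatment is required.
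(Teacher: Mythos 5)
Your proposal is correct and follows exactly the route the paper intends: the corollary is stated there as an immediate consequence of Lemma \ref{lemm:distance} and the observation \eqref{eq:dist-3}, and you have simply filled in the short chain of equalities (minimizing over $\tau$ via \eqref{eq:dist-3}, monotonicity of $\log\tan^2$, and replacing orbits by their closures). No discrepancy with the paper's argument.
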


\section{K-stability condition for pairs}
The stability condition in Definition \ref{defi:pair} needs to be weakened in certain applications,
such as, in studying K\"ahler metrics of constant scalar curvature. The right condition for this purpose was the one due to
S. Paul (cf. \cite{paul13}). In this section, following \cite{paul13}, we will introduce a weaker stability condition.

For simplicity, we assume $\bG\,=\,\SL(N+1,\CC)$ and $\bV, \bW$ be two $\bG$-representations as before.
There is a natural representation $\gl(N+1,\CC)$, which consists of all $(N+1)\times(N+1)$ matrices,
by left multiplication:\footnote{The following discussions still work when $\bG$ is
replaced by a general linear algebraic group and $\gl(N+1,\CC)$ is replaced by a faithful representation of $\bG$.}
$$\bG\times \gl(N+1,\CC)\,\mapsto\, \gl(N+1,\CC): \,(\sigma, B)\,\mapsto\, \sigma B.$$
We will still denote by $\bT$ a maximal algebraic subgroup of $\bG$ and write $\gl(N+1,\CC)$ as $\gl$ for simplicity.
Let $\cN(I)\subset \bM_\RR$ be the weight polytope of the identity matrix $I$ in $\gl$. This is a standard $N$-simplex which contains the origin.
Thus we can define the degree $\deg(\bV)$ of $\bV$ by
\begin{equation}\label{eq:deg-of-V}
\deg(\bV)\,=\,\min\{\,k\in \ZZ\,|\,k > 0~{\rm and}~\cN(v) \subset k\, \cN(I)~{\rm for ~all}~0\not= v\in \bV\,\}.
\end{equation}
It follows from this definition that
$$\overline{\bG [v,I^q]}\cap \mathbb{P}(\bV\oplus \{0\})\,=\,\emptyset,$$
where $I^q\in \bU = \gl^{\otimes q}$ for $q=\deg(\bV)$, that is, $(v,I^q)$ is semistable in the sense of Definition \ref{defi:pair}.
Then, by Theorem \ref{th:properness}, we have
\begin{lemm}\label{lemm:J-1}
There is a uniform constant $c >$ such that for all $\sigma\in \bG$,
$$\deg(\bV)\,\log ||\sigma || \,\ge \,\log ||\sigma v||\,-\,c,$$
where $||\sigma I||$ denotes the Hilbert-Schmidt norm of $\sigma  \in \gl$.
\end{lemm}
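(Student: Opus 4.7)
The plan is to derive the bound directly from the Kempf--Ness type criterion in Theorem \ref{th:properness} applied to the pair $(v,I^q)$, once we record that this pair is indeed semistable.

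First I would confirm the semistability of $(v,I^q)$, which is essentially asserted in the paragraph preceding the lemma. By definition of $q=\deg(\bV)$ we have $\cN(v)\subset q\cN(I)$, and since the weight polytope of $I^q\in \gl^{\otimes q}$ is exactly $q\cN(I)$, we get $\cN(v)\subset \cN(I^q)$. Hence for every one-parameter subgroup $\lambda \sim u$,
\begin{equation*}
w_\lambda(I^q)\,=\,\min_{x\in q\cN(I)}u(x)\,\le\,\min_{x\in \cN(v)}u(x)\,=\,w_\lambda(v),
\end{equation*}
so $(v,I^q)$ is numerically semistable and therefore semistable by Theorem \ref{th:HMP}. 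In fact, as observed in the text, the closure $\overline{\bG[v,I^q]}$ is disjoint from the whole hyperplane $\mathbb{P}(\bV\oplus\{0\})\supset \overline{\bG[v,0]}$, giving the same conclusion directly.

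Next I would invoke Theorem \ref{th:properness}: the semistability of $(v,I^q)$ implies that the functional
\begin{equation*}
p_{v,I^q}(\sigma)\,=\,\log\|\sigma I^q\|^2\,-\,\log\|\sigma v\|^2
\end{equation*}
is bounded from below on $\bG$, say $p_{v,I^q}(\sigma)\ge -2c$ for some constant $c>0$. Rearranging yields $\log \|\sigma v\|\le \log\|\sigma I^q\| + c$ for all $\sigma\in\bG$.

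Finally I would identify the norm on the right. Under the left-multiplication action on $\gl^{\otimes q}$, the element $I^q=I\otimes\cdots\otimes I$ is sent by $\sigma$ to $(\sigma I)\otimes\cdots\otimes(\sigma I)=\sigma^{\otimes q}$. Since the Hilbert--Schmidt norm is multiplicative under tensor products, $\|\sigma I^q\|=\|\sigma\|^q=\|\sigma\|^{\deg(\bV)}$. Substituting gives
\begin{equation*}
\log\|\sigma v\|\,\le\,\deg(\bV)\,\log\|\sigma\|\,+\,c,
\end{equation*}
which is the desired inequality. The only mild subtlety is checking that the Hermitian structure on $\gl^{\otimes q}$ we choose is the one induced from the Hilbert--Schmidt norm on $\gl$, so that the tensor-power identity $\|\sigma I^q\|=\|\sigma\|^q$ holds; this is a matter of convention, not a real obstacle.
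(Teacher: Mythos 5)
Your proposal is correct and follows essentially the same route as the paper: the text preceding the lemma asserts that $(v,I^q)$ is semistable (you justify this via the weight-polytope containment $\cN(v)\subset q\,\cN(I)=\cN(I^q)$ and Theorem \ref{th:HMP}, which is a valid elaboration of what the paper leaves implicit), and the lemma is then exactly the lower bound on $p_{v,I^q}$ from Theorem \ref{th:properness} combined with the identity $\|\sigma I^q\|=\|\sigma\|^{q}$. Nothing in your argument diverges from the paper's in substance; you have merely filled in the details it omits.
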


\begin{defi}\label{defi:p-stable}
Let $v \in \bV\setminus\{0\}$ and $w\in \bW\setminus\{0\}$. We call $(v,w)$ K-stable if the closure of the orbit $\bG ([v,w]\times [v,I^q])$ is contained in
$$\mathbb{P}(\{0\}\oplus \bW)\times \mathbb{P}(\{0\}\oplus \bU)\,\cup\,(\mathbb{P}(\bV\oplus \bW)\backslash \mathbb{P}(\bV\oplus \{0\}))\times (\mathbb{P}(\bV\oplus \bU)\backslash
\mathbb{P}(\{0\}\oplus \bU)).$$
We call $(v,w)$ K-semistable if and only if it is semistable.
\end{defi}
\begin{rema}
\label{rema:p-stable}
The condition in the above definition actually means that the limit of a sequence $\sigma_i [v,w]$ lies in $\mathbb{P}(\{0\}\oplus \bW)$ if $\sigma_i [v, I^q]$ diverges
to $\mathbb{P}(\{0\}\oplus \bU))$. The condition described above makes it easier to see the proof of the Hilbert-Mumford-Paul criterion for P-stability
by using previous arguments.
\end{rema}

The following is the Hilbert-Mumford-Paul criterion for K-stability.
\begin{theo}\label{th:HMP-2}
Let $v, \ w$ be as above. Then $(v,w)$ is K-stable if and only if for all one parameter subgroups $\lambda$ of $\bG$,
we have $w_{\lambda}(w)\,<\, w_{\lambda}(v)$ whenever $\deg(\bV) \,w_{\lambda}(I)\,<\, w_{\lambda}(v)$.
\end{theo}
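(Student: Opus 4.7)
The plan is to adapt the proof of Theorem \ref{th:HMP} to the product variety $X = \mathbb{P}(\bV\oplus\bW)\times \mathbb{P}(\bV\oplus\bU)$ with its diagonal $\bG$-action. The easy direction is immediate: assume $(v,w)$ is K-stable and let $\lambda$ be a one parameter subgroup with $\deg(\bV)\,w_\lambda(I) < w_\lambda(v)$. By \eqref{eq:weight-2} and the identity $w_\lambda(I^q) = q\,w_\lambda(I)$ for $q=\deg(\bV)$, the limit $\lim_{\alpha\to 0}\lambda(\alpha)[v,I^q]$ lies in $\mathbb{P}(\{0\}\oplus\bU)$. By the K-stability condition together with Remark \ref{rema:p-stable}, the companion limit $\lim_{\alpha\to 0}\lambda(\alpha)[v,w]$ must lie in $\mathbb{P}(\{0\}\oplus\bW)$, which forces $w_\lambda(w) < w_\lambda(v)$.

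For the converse, assume the numerical condition and suppose K-stability fails. Then there is a sequence $\{\sigma_i\}\subset\bG$ such that $(\sigma_i[v,w],\sigma_i[v,I^q])$ converges to some $(p_1,p_2)\in X$ outside the allowed set of Definition \ref{defi:p-stable}. Since Lemma \ref{lemm:J-1} gives $\deg(\bV)\,w_\lambda(I)\leq w_\lambda(v)$ for every one parameter subgroup $\lambda$, the relevant bad configuration we must produce from $(p_1,p_2)$ is $p_2\in\mathbb{P}(\{0\}\oplus\bU)$ while $p_1\notin\mathbb{P}(\{0\}\oplus\bW)$. I now follow the Cartan decomposition strategy of Theorem \ref{th:HMP}: write $\sigma_i=k_i t_i k_i'$ with $k_i,k_i'\in\bK$ and $t_i\in\bT$, extract a subsequence with $k_i\to k$, and translate by $k^{-1}$ (the forbidden set is $\bG$-invariant) to reduce to the case $(p_1,p_2)\in\overline{\bT\bK([v,w],[v,I^q])}$ with $\bT(p_1,p_2)$ closed in the complement of the allowed set.

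The heart of the argument is then the analog of the Claim in Theorem \ref{th:HMP}: there exists $k\in\bK$ with $\overline{\bT k([v,w],[v,I^q])}\cap\bT(p_1,p_2)\neq\emptyset$. I would prove this by contradiction, covering the ambient product by $\bT$-invariant affine opens built from $\bT$-weight bases of $\bV\oplus\bW$ and of $\bV\oplus\bU$ (each exactly as in \eqref{eq:hyperplane}), separating the two torus orbits by $\bT$-invariant polynomials on each chart, and using a partition of unity on the compact group $\bK$ to reach a contradiction from $(p_1,p_2)\in\overline{\bT\bK([v,w],[v,I^q])}$. Granting the Claim, a common $\bT$-invariant affine chart contains both $\bT k([v,w],[v,I^q])$ and $\bT(p_1,p_2)$; applying Lemma \ref{lemm:richardson} to the underlying $\bT$-representation produces a one parameter subgroup $\lambda:\CC^*\to\bT$ with $\lim_{t\to 0}\lambda(t)\,k([v,w],[v,I^q])\in\bT(p_1,p_2)$. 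Conjugation by $k^{-1}$ yields a 1-PS $\bar\lambda$ of $\bG$ whose limit on $([v,w],[v,I^q])$ again avoids the allowed set, and \eqref{eq:weight-2} translates this into $\deg(\bV)\,w_{\bar\lambda}(I)<w_{\bar\lambda}(v)$ together with $w_{\bar\lambda}(w)\geq w_{\bar\lambda}(v)$, contradicting numerical K-stability.

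The step I expect to be the main obstacle is the Claim in the product setting: Theorem \ref{th:HMP} arranges a single coordinate cover of one projective space, whereas here degeneration can occur independently in each factor. The cleanest workaround is to pass to the Segre embedding $X\hookrightarrow \mathbb{P}((\bV\oplus\bW)\otimes(\bV\oplus\bU))$, so that the hyperplane cover of \eqref{eq:hyperplane} built from a $\bT$-weight basis of the tensor product pulls back to a $\bT$-invariant affine cover of $X$, and the separating-function/partition-of-unity argument transports verbatim. A secondary subtlety is checking that one can legitimately restrict attention to the bad configuration with $p_2\in\mathbb{P}(\{0\}\oplus\bU)$, which is where the global inequality of Lemma \ref{lemm:J-1} is needed to rule out the complementary degeneration via 1-PS.
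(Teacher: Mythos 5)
Your overall strategy is exactly the paper's: the paper gives no independent argument for Theorem \ref{th:HMP-2}, saying only that it ``can be proved in an identical way'' as Theorem \ref{th:HMP}, and your proposal is a faithful transcription of that proof to the product $\mathbb{P}(\bV\oplus\bW)\times\mathbb{P}(\bV\oplus\bU)$. Your observation that the Claim should be run after a Segre embedding, so that the affine cover \eqref{eq:hyperplane} and the separating-function/partition-of-unity argument carry over verbatim, is the right way to handle the one genuinely new feature (independent degeneration in the two factors), and the forward direction via $w_\lambda(I^q)=\deg(\bV)\,w_\lambda(I)$ is correct.

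The one step I do not accept as written is your reduction to the single bad configuration ``$p_2\in\mathbb{P}(\{0\}\oplus\bU)$ and $p_1\notin\mathbb{P}(\{0\}\oplus\bW)$'' by appeal to Lemma \ref{lemm:J-1}. Definition \ref{defi:p-stable}, read literally, also forbids the configuration $p_2\notin\mathbb{P}(\{0\}\oplus\bU)$ with $p_1\in\mathbb{P}(\bV\oplus\{0\})$, and Lemma \ref{lemm:J-1} says nothing about either of these loci: it only guarantees that $p_2$ never lands in $\mathbb{P}(\bV\oplus\{0\})\subset\mathbb{P}(\bV\oplus\bU)$, which is a third, independent condition. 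Worse, that second forbidden configuration is invisible to the numerical criterion of the theorem: a one-parameter subgroup realizing it would satisfy $\deg(\bV)\,w_\lambda(I)=w_\lambda(v)$ (equality, by Lemma \ref{lemm:J-1}) together with $w_\lambda(w)>w_\lambda(v)$, and the stated criterion imposes no constraint when the hypothesis $\deg(\bV)\,w_\lambda(I)<w_\lambda(v)$ fails. So under the literal reading of the definition your argument (and indeed the theorem) has a gap in exactly that case. The fix is to adopt the interpretation of Remark \ref{rema:p-stable}, under which only the first configuration is forbidden, so that no reduction --- and hence no use of Lemma \ref{lemm:J-1} --- is needed at this point; you should state explicitly that this is the definition you are using.
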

This can be proved in an identical way as we did for Theorem \ref{th:HMP}.

Similarly, we have the analogue of Theorem \ref{th:properness}
\begin{theo}\label{th:properness-3}
$(v,w)$ is P-stable if and only if $p_{v,w}$ is proper modulo $p_{v,I^q}$, i.e., for any sequence $\sigma_i\in\bG$,
$$p_{v,w}(\sigma_i) \,\rightarrow\,\infty~~{\rm whenever}~~p_{v, I^q}(\sigma_i) \,\rightarrow\,\infty,$$
where $p_{v, I^q}(\sigma_i)\,=\,\deg(\bV)\,\log ||\sigma_i || - \log ||\sigma_i v||$.
\end{theo}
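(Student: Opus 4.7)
The plan is to mirror the proof of Theorem \ref{th:properness}, using Lemma \ref{lemm:distance} as a dictionary between the Kempf--Ness energies and Fubini--Study distances. Equipping $\bU=\gl^{\otimes q}$ with the Hilbert--Schmidt norm, the lemma applied to both pairs gives
\[
p_{v,w}(\sigma)=\log\tan^2 d\bigl(\sigma[v,w],\sigma[v,0]\bigr),\qquad p_{v,I^q}(\sigma)=\log\tan^2 d\bigl(\sigma[v,I^q],\sigma[v,0]\bigr),
\]
the distances being computed in $\mathbb{P}(\bV\oplus\bW)$ and $\mathbb{P}(\bV\oplus\bU)$ respectively. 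Consequently $p_{v,w}(\sigma_i)\to\infty$ iff $\sigma_i[v,w]$ approaches $\mathbb{P}(\{0\}\oplus\bW)$, and $p_{v,I^q}(\sigma_i)\to\infty$ iff $\sigma_i[v,I^q]$ approaches $\mathbb{P}(\{0\}\oplus\bU)$.

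For the forward direction, suppose $(v,w)$ is K-stable and $p_{v,I^q}(\sigma_i)\to\infty$. Given any subsequence, pass to a further subsequence along which $\sigma_i[v,w]$ converges in the compact space $\mathbb{P}(\bV\oplus\bW)$ to some $q_1$ and $\sigma_i[v,I^q]$ converges to some $q_2\in\mathbb{P}(\{0\}\oplus\bU)$. Then $(q_1,q_2)\in\overline{\bG([v,w]\times[v,I^q])}$, so the reformulation of K-stability in Remark \ref{rema:p-stable} forces $q_1\in\mathbb{P}(\{0\}\oplus\bW)$, whence $p_{v,w}(\sigma_i)\to\infty$ along this further subsequence. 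The subsequence-of-subsequence principle upgrades this to $p_{v,w}(\sigma_i)\to\infty$ along the original sequence.

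For the converse, assume $p_{v,w}$ is proper modulo $p_{v,I^q}$ and let $(q_1,q_2)\in\overline{\bG([v,w]\times[v,I^q])}$ be arbitrary, realized by some sequence $\sigma_i\in\bG$. If $q_2\in\mathbb{P}(\{0\}\oplus\bU)$, the dictionary yields $p_{v,I^q}(\sigma_i)\to\infty$, so by properness modulo we get $p_{v,w}(\sigma_i)\to\infty$, i.e.\ $q_1\in\mathbb{P}(\{0\}\oplus\bW)$, placing $(q_1,q_2)$ in the first piece of the allowed set. When $q_2\notin\mathbb{P}(\{0\}\oplus\bU)$, Lemma \ref{lemm:J-1} keeps $p_{v,I^q}(\sigma_i)$ bounded along the approximating sequence, which via the dictionary keeps the orbit away from the degenerate locus and places $(q_1,q_2)$ in the second piece. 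Hence the orbit closure sits inside the required union, so $(v,w)$ is K-stable.

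The principal obstacle is the bookkeeping in both directions: upgrading subsequential convergence to full-sequence convergence of $p_{v,w}(\sigma_i)$ in the forward direction, and justifying the ``second piece'' alternative in the converse. Both are handled by Remark \ref{rema:p-stable}, which collapses K-stability to the single implication $\sigma_i[v,I^q]\to\mathbb{P}(\{0\}\oplus\bU)\Rightarrow\sigma_i[v,w]\to\mathbb{P}(\{0\}\oplus\bW)$; modulo this observation the argument is a routine transcription of the proof of Theorem \ref{th:properness}.
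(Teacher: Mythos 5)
Your overall strategy is the one the paper intends: the paper offers no proof of Theorem \ref{th:properness-3} beyond ``similarly to Theorem \ref{th:properness}'', and your translation via Lemma \ref{lemm:distance} (properness of $p_{v,w}$ $\Leftrightarrow$ $\sigma_i[v,w]$ degenerating to $\mathbb{P}(\{0\}\oplus\bW)$, and likewise for $p_{v,I^q}$) together with the subsequence principle is exactly the right dictionary. The forward direction and Case 1 of the converse are correct as written.

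There is, however, a genuine gap in Case 2 of your converse, and the citation of Lemma \ref{lemm:J-1} there is misdirected. The second piece of the allowed set in Definition \ref{defi:p-stable} is $(\mathbb{P}(\bV\oplus\bW)\backslash\mathbb{P}(\bV\oplus\{0\}))\times(\mathbb{P}(\bV\oplus\bU)\backslash\mathbb{P}(\{0\}\oplus\bU))$. Its second factor is handled by the case assumption $q_2\notin\mathbb{P}(\{0\}\oplus\bU)$ alone; Lemma \ref{lemm:J-1} is not needed there. What you actually must verify is membership in the \emph{first} factor, namely $q_1\notin\mathbb{P}(\bV\oplus\{0\})$, i.e.\ that $\|\sigma_i w\|/\|\sigma_i v\|$ stays bounded away from $0$ along the approximating sequence. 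Lemma \ref{lemm:J-1} bounds $p_{v,I^q}$ from below and says nothing about $p_{v,w}$, and the properness-modulo hypothesis is only activated when $p_{v,I^q}(\sigma_i)\to\infty$, which is precisely not the situation in Case 2. So nothing in your hypotheses rules out $q_1\in\mathbb{P}(\bV\oplus\{0\})$; indeed for general pairs it cannot be ruled out (e.g.\ if $\bW$ contains a trivial summand and $w$ is $\bG$-invariant, then $\|\sigma_i w\|$ is constant while $\|\sigma_i v\|$ can blow up at the maximal rate $\|\sigma_i\|^{\deg\bV}$). Your escape hatch is to take Remark \ref{rema:p-stable} as the operative definition of K-stability, in which case Case 2 is vacuous --- but then you are using an unproved (and, for arbitrary pairs, strictly weaker) reformulation of Definition \ref{defi:p-stable} rather than the definition itself. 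You should either prove the equivalence asserted in Remark \ref{rema:p-stable} for the pairs under consideration, or state explicitly that the theorem is being proved for the Remark's version of the condition; as written, the appeal to Lemma \ref{lemm:J-1} papers over the one step that is not routine.
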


\section{Resultants and hyperdiscriminants}
Let $M\subset \CC P^N$ be a complex $n$-dimensional submanifold of degree $d$
and $L$ be the restriction of ${\cal O}(1)$ to $M$. Following \cite{paul08}, we will assign
$M$ a pair of vectors $(R_M, \Delta_M)$.

Let us recall the definition of $R_M$ and $\Delta_M$. Denote by $G(k,N)$ the Grassmannian of all $k$-dimensional subspaces in
$\CC P^N$. We define
\begin{equation}
\label{eq:chow-1}
Z_M\,=\,\{\, P \in G(N-n-1,N)\,|\,P\cap M\,\not=\,\emptyset\,\}.
\end{equation}
It is known that $Z_M$ is an irreducible divisor of $G(N-n-1,N)$ with degree $d$, so there is a section in $H^0(G(N-n-1,N), {\cal O}(d))$
such that $Z_M=\{ R_M = 0\}$. Such a section pulls back to a homogeneous polynomial $R_M \in \CC[M_{(n+1)\times (N+1)}] $ of degree $(n+1) d$, where
$M_{k\times l}$ denotes the space of all $k\times l$ matrices. In fact, $R_M$ is $\SL(n+1,\CC)$-invariant which acts on $M_{n+1)\times (N+1)}$
by left multiplication. Usually, $R_M$ is called a $M$-resultant.

Next consider the Segre embedding:
$$M\times \CC P^{n-1} \subset \CC P^N\times \CC P^{n-1} \mapsto \mathbb{P}(M_{n\times (N+1)}^\vee),$$
where $M_{k\times l}^\vee$ denotes its dual space of $M_{k\times l}$. Then we define
\begin{equation}
\label{eq:chow-2}
Y_M\,=\,\{\, H\, \subset\,\mathbb{P}(M_{n\times (N+1)}^\vee)\,|\, T_p(M\times \CC P^{n-1}) \,\subset\, H~{\rm for~some}~p\,\}.
\end{equation}
One can show that $Y_M$ is a divisor in $\mathbb{P}(M_{n\times (N+1)}^\vee)$ of degree $\bar d\,=\,n d (n+1-\mu)$, where
$$\mu\,=\, \frac{c_1(M) \cdot c_1(L)^{n-1} ([M])}{c_1(L)^n([M])}.$$
Hence, there is a section in
$H^0(\mathbb{P}(M_{n\times (N+1)}^\vee), {\cal O}(\bar d))$ whose zero set is $Y_M$. Such a section corresponds to a homogeneous polynomial
$\Delta_M$ in $\CC[M_{n\times (N+1)}]$ of degree $\bar d$, referred as the hyperdiscriminant of $M$.

Now we can associate $M$ with $(R(M), \Delta(M))$ in $\bV\times \bW$ as follows:
\footnote{Since this pair was introduced by S. Paul, we may call it P-coordinate of $M$ for convenience.}
\begin{eqnarray}\label{eq:p-coor}
R(M)&=&R_M^{\bar d} \,\in \, \bV\,=\,\CC_r [M_{(n+1)\times (N+1)}],\\
\label{eq:p-coor-2}\Delta(M)&=&\Delta_M^{(n+1)d}\,\in\, \bW\,=\,\CC_r [M_{n \times (N+1)}],
\end{eqnarray}
where $r \,= \,(n+1) d \bar d$ and $\CC_r[\CC^k]$ denotes the space of homogeneous polynomials of degree $r$ on $\CC^k$.
Note that $\deg(bV)=\deg(\bW)=r$.

The automorphism group $\bG = \SL(N+1,\CC)$ of $\CC P^N$ induces actions on $\bV$ and $\bW$ in a natural way. Thus, we can have
\begin{defi}\label{defi:paul}
We call $M\subset \CC P^N$ P-stable (resp. P-semistable) with respect to the polarization $L$ if its P-coordinate
$(R(M),\Delta(M))$ is K-stable (resp. semistable) in the sense of
Definition \ref{defi:p-stable} (resp. Definition \ref{defi:pair}).
\end{defi}

\section{CM-stability as stability of pairs}

The CM stability was introduced by myself in 1996 to study the problem of K\"ahler-Einstein metrics on Fano manifolds.
It can be easily extended to any compact K\"ahler manifold $M$ polarized by an ample line bundle $L$.
In this section, following \cite{paul12a}, we reformulate
the CM-stability as a stable pair by using P-coordinates.

We assume $M\subset \CC P^N$ and $L = {\cal O}|_M$ and $\bG\,=\,\SL(N+1)$..
Given any $\sigma\in \bG$, there is a K\"ahler potential $\varphi_\sigma$ on $M$ such that
$$\sigma^*\omega_{FS}|_M\,=\,\omega_0\,+\, \sqrt{-1}\,\partial\bar\partial \varphi_\sigma,$$
where $\omega_{FS}$ denotes the Fubini-Study metric on $\CC P^N$ and $\omega_0$ is a fixed K\"ahler metric with K\"ahler class
$2\pi c_1(L)$. More precisely, we define $\varphi_\sigma$ as follows: Choose an Hermitian metric $||\cdot||_0$ on $L$ with curvature form $\omega_0$,
then we have an induced inner product on $H^0(M,L)$ by $\omega_0$ and $|\cdot ||_0$, choose an orthonormal basis $\{S_i\}_{0\le i\le N}$ with respect to this inner product, then we can set
\begin{equation}
\label{eq:K-potential}
\varphi_\sigma \,=\, \log \left (\sum_{i=0}^N \,|| \sigma (S_i) ||_0^2 \right ).
\end{equation}
Then we have a function on $\bG$:
$$\bF (\sigma) \,=\,\nu _{\omega_0} (\varphi_\sigma),$$
where $\nu_{\omega_0}$ is Mabuchi's K-energy:
$$\nu_{\omega_0}(\varphi)\,=\,-\int_0^1 \int_M \varphi \,({\rm Ric}(\omega_{t\varphi}) - \mu \,\omega_{t\varphi} )\wedge \omega_{t\varphi}^{n-1} \wedge dt,$$
where $\omega_\varphi = \omega_0\,+\, \sqrt{-1}\,\partial\bar\partial\, \varphi$. Also we define
\begin{equation}
\label{eq:func-2}
\bJ_{\omega_0}(\varphi)\,=\, \sum_{i=0}^{n-1} \frac{i+1}{n+1} \int_M
\sqrt{-1} \, \partial \varphi \wedge \overline{\partial} \varphi \wedge
\omega^i_0\wedge \omega_{\varphi}^{n-i-1}.
\end{equation}
Put $\bJ (\sigma)\,=\, \bJ_{\omega_0}(\varphi_\sigma)$.
\begin{defi}
\label{defi:CM-1}
We say $M$ CM-stable with respect to $L$ if for any sequence $\sigma_i\in \bG$,
$$\bF(\sigma_i) \rightarrow\infty~{\rm whenever}~\bJ(\sigma_i) \rightarrow \infty.$$
We say $M$ CM-semistable with respect to $L$ if $\bF$ is bounded from below.
\end{defi}

\begin{rema}
\label{rema:CM}
As we argued in \cite{tian97}, there is an algebraic formulation of the CM-stability in terms of the orbit of a lifting of $M$
in certain determinant line bundle, referred as the CM-polarization.
\end{rema}

In \cite{paul08}, S. Paul proved a remarkable formula for $\bF$ in terms of the resultant $R_M$ and the hyperdiscriminant $\Delta_M$. As a consequence,
he showed in \cite{paul12a}

\begin{theo}\label{th:cm=p}
Let $M\subset \CC P^N$, $L$ and $\bG$ be as above. Then $M$ is CM-stable (resp. CM-semistable) with respect to $L$ if and only if
$M$ is P-stable (resp. P-semistable), i.e., $(R(M),\Delta(M))$ is P-stable (resp. P-semistable).
\end{theo}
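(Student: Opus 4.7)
The approach is to convert the transcendental stability condition of Definition~\ref{defi:CM-1} into an algebraic one by expressing $\bF$ and $\bJ$ in terms of the Kempf--Ness functionals $p_{v,w}$ of Section~4, evaluated at the P-coordinate $(v,w) = (R(M),\Delta(M))$. The crucial input is Paul's closed-form formula for the K-energy from \cite{paul08}, which asserts the existence of a positive constant $\kappa$ such that
$$\bF(\sigma) \,=\, \kappa \, p_{R(M),\Delta(M)}(\sigma) \,+\, O(1) \qquad \forall \,\sigma\in \bG,$$
where the $O(1)$ depends only on the choice of Hermitian inner products on $\bV, \bW$ and on the reference K\"ahler form $\omega_0$. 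The powers $R(M)=R_M^{\bar d}$ and $\Delta(M)=\Delta_M^{(n+1)d}$ appearing in \eqref{eq:p-coor} and \eqref{eq:p-coor-2} are chosen precisely so that both polynomials live in representations of the common degree $r=(n+1)d\bar d$; this is what makes Paul's identity take this clean, homogeneous form.

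For the functional $\bJ$, one shows analogously that there is a positive constant $\kappa'$ with
$$\bJ(\sigma)\,=\,\kappa'\, p_{R(M),I^q}(\sigma)\,+\,O(1),$$
or at least that each side tends to infinity precisely when the other does. One direction of this comparison is already provided by Lemma~\ref{lemm:J-1}, which controls $\log\|\sigma R(M)\|$ by $\deg(\bV)\log\|\sigma\|$; the reverse bound comes from the Bergman-kernel expansion, which identifies $\bJ_{\omega_0}(\varphi_\sigma)$, up to a bounded term, with the operator norm of $\sigma$ modulo the stabilizer of $M$. Granted both identifications, CM-stability---properness of $\bF$ modulo $\bJ$---translates precisely into properness of $p_{R(M),\Delta(M)}$ modulo $p_{R(M),I^q}$, which by Theorem~\ref{th:properness-3} is the K-stability of $(R(M),\Delta(M))$, i.e.\ the P-stability of $M$. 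The semistable case is easier: CM-semistability (boundedness from below of $\bF$) becomes boundedness from below of $p_{R(M),\Delta(M)}$, which by Theorem~\ref{th:properness} is the semistability of the pair.

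The genuine obstacle is Step~1, namely Paul's formula itself. This is a theorem of substance, not a formality: its proof in \cite{paul08} requires the Deligne pairing, Tian's asymptotic expansion of the induced Hermitian metrics on $\det H^0(M,L^k)$, and careful bookkeeping of the degrees $(n+1)d$ and $\bar d = nd(n+1-\mu)$. The appearance of the hyperdiscriminant $\Delta_M$ and the specific normalization of the P-coordinate are dictated by the numerical coincidences forced by this formula. A secondary technical point enters in Step~2: one must ensure that the comparison between $\bJ(\sigma)$ and $p_{R(M),I^q}(\sigma)$ holds uniformly on all of $\bG$, not merely along one-parameter subgroups. This is handled by the Cartan decomposition $\bG=\bK\bT\bK$ together with the compactness of $\bK$, in the same spirit as the reduction carried out in the proof of Theorem~\ref{th:HMP}.
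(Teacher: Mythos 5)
Your overall architecture coincides with the paper's: reduce CM-stability to properness of $p_{R(M),\Delta(M)}$ modulo $p_{R(M),I^q}$ and invoke Theorem \ref{th:properness-3} (resp.\ Theorem \ref{th:properness} for the semistable case), with Step 1 --- the uniform comparison $|\bF(\sigma)-a_n\,p_{R(M),\Delta(M)}(\sigma)|\le C$ --- imported from Theorem A of \cite{paul08}. That part is exactly what the paper does.

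The gap is in your Step 2, the comparison of $\bJ$ with $p_{R(M),I^q}$; the mechanism you propose does not work as stated. Lemma \ref{lemm:J-1} only asserts that $p_{v,I^q}$ is bounded below on $\bG$; it gives no inequality in either direction between $\bJ(\sigma)$ and $p_{R(M),I^q}(\sigma)$, so it cannot supply ``one direction'' of your comparison. The ``reverse bound'' you invoke --- a Bergman-kernel identification of $\bJ_{\omega_0}(\varphi_\sigma)$ with the operator norm of $\sigma$ modulo the stabilizer of $M$ --- is not a correct statement of what is needed: $p_{v,I^q}(\sigma)=\deg(\bV)\log||\sigma||-\log||\sigma R(M)||$ involves the Hilbert--Schmidt norm of $\sigma$ \emph{and} the resultant term, and no quotient by a stabilizer enters. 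The actual input, which is missing from your argument, is the identity from \cite{paul04},
\begin{equation*}
(n+1)\,\bJ(\sigma)\,=\,(n+1)\int_M\varphi_\sigma\,\omega_0^n\,-\,\log||\sigma R_M||^2,
\end{equation*}
after which Step 2 reduces to showing that $\int_M\varphi_\sigma\,\omega_0^n/d$ equals $\log||\sigma||^2$ up to a uniform $O(1)$ on $\bG$. The paper does this by an elementary estimate: writing $\sigma=(\vartheta_{ij})$ and normalizing $\beta_{ij}=\vartheta_{ij}/||\sigma||$, the upper bound follows from concavity of the logarithm together with $\sum_i\int_M||\sum_j\beta_{ij}S_j||^2\,\omega_0^n=1$, and the lower bound follows from the $\alpha$-invariant (a uniform bound on $\int_M||\cdot||^{-\gamma}$ for some $\gamma>0$). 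These estimates are already uniform on all of $\bG$, so the Cartan decomposition you propose as a ``secondary technical point'' is not needed here, and would not by itself upgrade a one-parameter-subgroup estimate to a global one. Once the displayed identity and the $O(1)$ estimate are in place, your concluding deduction via Theorems \ref{th:properness-3} and \ref{th:properness} agrees with the paper's.
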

\begin{proof}
Let $\bV$ and $\bW$ be defined in \eqref{eq:p-coor} and \eqref{eq:p-coor-2} in last section.
By Theorem A in \cite{paul08}, there is a uniform constant $C$ such that for all $\sigma\in \bG$,
we have
\begin{equation}\label{eq:F-1}
|\,\bF(\sigma ) \,-\, a_n\,  p_{R(M),\Delta(M)}(\sigma) \,|\, \leq\,C,
\end{equation}
where $a_n$ is a uniform constant, $p_{R(M),\Delta(M)}$ is defined in \eqref{eq:distance} with $v= R(M)$ and $w=\Delta(M)$.

Next, we observe that the main result of \cite{paul04} gives
\begin{equation}\label{eq:J-1}
 (n+1)\,\bJ (\sigma)\,= \,  (n+1)\, \int_M \varphi_\sigma \,\omega_0^n\,-\, \log ||\sigma R_M||^2
\end{equation}
It follows
\begin{equation}\label{eq:J-2}
(n+1) \,\bar d  \,\bJ (\sigma)\,= \,  \deg (\bV) \, \int_M \varphi_\sigma \,\frac{\omega_0^n}{d}\,-\, \log ||\sigma R(M)||^2.
\end{equation}
Here we have used the fact that $\deg(\bV) = r$.

If we write $\sigma\in \SL(N+1,\CC)$ as a $(N+1)\times (N+1)$-matrix $(\vartheta_{ij})$ with determinant one, then
the Hilbert-Schmidt norm of $\sigma$ is given by
$$||\sigma||^2\,=\,\sum_{i,j=0}^{N} |\vartheta|^2.$$
Since $\{S_j\}_{0\le j\le N}$ is an orthonormal basis, we have
$$\sum_{i=0}^{N} \,\int_M ||\sum_{j=0}^{N} \vartheta_{ij} S_j ||^2 \,\omega_0^n \,=\, ||\sigma||^2 .$$
Hence, if we put $\beta_{ij} = \vartheta_{ij}/ ||\sigma||$, then we have

\vskip 0.1in
\noindent
({\bf 1}) For any $i$ between $0$ and $N$,
$$ \int_M ||\sum_{j=0}^{N} \beta_{ij} S_j ||^2 \,\omega_0^n \,\le\, 1;$$
\vskip 0.1in
\noindent
({\bf 2}) There is at least one $i'$ such that
$$ \int_M ||\sum_{j=0}^{N} \beta_{i'j} S_j ||^2 \,\omega_0^n \,\ge\, \frac{1}{N+1}.$$
By the concavity of the logarithmic function and ({\bf 1}), we have
$$\int_M\, \log \left (\sum_{i=0}^{N} ||\sum_{j=0}^N \beta_{ij} S_j ||^2\right ) \,\frac{\omega_0^n}{d} \,\le\,\log \left (
\sum_{i=0}^{N} \int_M \,||\sum_{j=0}^N \beta_{ij} S_j ||^2 \,\frac{\omega_0^n}{d} \right )\,\le\,\log (N+1).$$

On the other hand, one can deduce from ({\bf 2}) that for some $C > 0$ depending only on $M\subset \CC P^N$,
$$\int_M\, \log \left(||\sum_{j=0}^N \beta_{i'j} S_j ||^2 \right )\,\frac{\omega_0^n}{d} \,\ge\,- C.$$
In fact, by using the $\alpha$-invariant, one can show a stronger integral bound, that is, there is a uniform bound on the integral of
$||\sum_{j=0}^N \beta_{i'j} S_j ||^{-\gamma}$ for some $\gamma > 0$. 

We may assume that $C \ge \log (N+1)$. Thus, combining the above two estimates with \eqref{eq:J-2}, we get
\begin{equation}\label{eq:J-3}
|\,\bJ(\sigma) \,-\, p_{v, I^q} (\sigma)\,| \,\le\, C.
\end{equation}
The theorem follows easily from \eqref{eq:F-1}, \eqref{eq:J-3} and the definitions of CM-stability and P-stability.
\end{proof}

The arguments in the proof of Theorem \ref{th:cm=p} also yield

\begin{theo}
\label{th:ncm=np}
The P-coordinate $(R(M),\Delta(M))$ is numerically K-stable (resp. numerically semistable) if and only if $\bF$ is proper (resp. bounded from below)
along any one-parameter subgroup of $\bG$.
\end{theo}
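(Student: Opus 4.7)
The plan is to read Theorem \ref{th:ncm=np} as a ``one-parameter subgroup'' version of Theorem \ref{th:cm=p}. I would reuse the same two uniform bounds that were proved in the course of Theorem \ref{th:cm=p} and then feed them into the asymptotic analysis of $p_{v,w}$ along a fixed 1-PS, exactly as in the easy direction of Theorem \ref{th:HMP}.

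The first step is to recall from the proof of Theorem \ref{th:cm=p} the two global bounds
$$
|\,\bF(\sigma)-a_n\,p_{R(M),\Delta(M)}(\sigma)\,|\,\le\,C,\qquad
|\,\bJ(\sigma)-p_{v,I^q}(\sigma)\,|\,\le\,C,
$$
valid for every $\sigma\in\bG$ with $v=R(M)$. Restricting $\sigma$ to a 1-PS $\lambda:\CC^*\to\bG$ immediately shows that $\bF(\lambda(t))$ is proper (resp. bounded below) modulo $\bJ(\lambda(t))$ if and only if the same holds for $p_{R(M),\Delta(M)}(\lambda(t))$ modulo $p_{v,I^q}(\lambda(t))$. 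So the Hermitian/metric estimates are no longer needed once we are on a 1-PS.

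The second step is to compute the asymptotics of these two $p$-functions along $\lambda$. Using the defining property \eqref{eq:weight-2} of the weight, for any non-zero vector $x$ in a rational $\bG$-representation one has
$$
\log\|\lambda(t)\,x\|^2 \,=\, 2\,w_\lambda(x)\,\log|t|\,+\,O(1)\quad\text{as }|t|\to 0.
$$
Applied to $R(M),\Delta(M),v$ and (for the identity matrix) to $I^q\in\gl^{\otimes q}$, this gives
\begin{equation*}
p_{R(M),\Delta(M)}(\lambda(t))\,=\,2\bigl(w_\lambda(R(M))-w_\lambda(\Delta(M))\bigr)\log|t|\,+\,O(1),
\end{equation*}
and similarly $p_{v,I^q}(\lambda(t))=2(w_\lambda(v)-\deg(\bV)\,w_\lambda(I))\log|t|+O(1)$, where one uses that the Hilbert--Schmidt norm $\|\lambda(t)\|^2$ has leading order $|t|^{2w_\lambda(I)}$ (and correspondingly for $\lambda^{-1}$). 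Thus both quantities are affine in $\log|t|$, and their limiting behaviour as $|t|\to 0$ or $|t|\to\infty$ is completely read off from the differences of weights.

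Combining the two steps yields the theorem. For the stable case: by Theorem \ref{th:HMP-2}, numerical K-stability of $(R(M),\Delta(M))$ means $w_\lambda(\Delta(M))<w_\lambda(R(M))$ whenever $\deg(\bV)\,w_\lambda(I)<w_\lambda(R(M))$, for every 1-PS $\lambda$; by the linear asymptotics this is exactly the assertion $p_{R(M),\Delta(M)}(\lambda(t))\to\infty$ whenever $p_{v,I^q}(\lambda(t))\to\infty$; by the two uniform bounds this is in turn equivalent to $\bF(\lambda(t))\to\infty$ whenever $\bJ(\lambda(t))\to\infty$, i.e.\ $\bF$ is proper along $\lambda$ modulo $\bJ$. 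The semistable case is identical with strict inequalities replaced by non-strict ones. The only bookkeeping point that requires some care -- and which I expect to be the main technical obstacle -- is to track both limits $t\to 0$ and $t\to\infty$ (equivalently $\lambda$ and $\lambda^{-1}$) consistently, so that properness/lower-boundedness of $\bF$ along the full $\CC^*$-action is correctly translated into the weight inequalities for all 1-PS.
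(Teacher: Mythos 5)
Your proposal is correct and follows exactly the route the paper intends: the paper offers no separate argument beyond the remark that ``the arguments in the proof of Theorem \ref{th:cm=p} also yield'' this statement, and your two steps --- reusing the uniform bounds \eqref{eq:F-1} and \eqref{eq:J-3}, then reading off the divergence of $p_{R(M),\Delta(M)}$ and $p_{v,I^q}$ along $\lambda$ from the sign of $w_\lambda(\Delta(M))-w_\lambda(R(M))$ and $\deg(\bV)\,w_\lambda(I)-w_\lambda(v)$ via the linear-in-$\log|t|$ asymptotics --- are precisely those arguments restricted to a one-parameter subgroup. The only caveat is interpretive rather than mathematical: ``proper along a one-parameter subgroup'' must be read modulo $\bJ$ (i.e.\ in the sense of Definition \ref{defi:CM-1} restricted to the orbit of $\lambda$), as you do, and in the semistable case the auxiliary hypothesis becomes vacuous by Lemma \ref{lemm:J-1}, so your ``non-strict'' version correctly collapses to $w_\lambda(\Delta(M))\le w_\lambda(R(M))$ for all $\lambda$.
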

Therefore, $M$ is CM-stable (resp. CM-semistable) with respect to $L$ if and only if
$(R(M),\Delta(M))$ is numerically K-stable (resp. numerically semistable). However, it follows from
\cite{paultian04} that $\bF$ is proper along an one-parameter subgroup $\lambda$ of $\bG$ if and only if the associated Futaki invariant
is positive \footnote{As in \cite{paul13}, we can simply define the Futaki invariant as $\bw_\lambda (\Delta(M)) - \bw_\lambda (R(M))$.
It coincides with the one introduced by Futaki and generalized by Ding-Tian and Donaldson when $\lim \lambda(t)(M)$ is reduced.}
Thus we have
\begin{coro}
\label{coro:cm=p}
$M$ is CM-stable if and only if it is K-stable.
\end{coro}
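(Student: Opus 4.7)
The plan is to compose the equivalences already established in the paper, since the corollary is essentially a chain of prior theorems. First, by Theorem \ref{th:cm=p}, CM-stability of $M$ with respect to $L$ is equivalent to P-stability of the pair $(R(M),\Delta(M))$, i.e.\ K-stability of this pair in the sense of Definition \ref{defi:p-stable}. By the Hilbert-Mumford-Paul criterion (Theorem \ref{th:HMP-2}), this is in turn equivalent to numerical K-stability of $(R(M),\Delta(M))$, i.e.\ to a weight inequality on every non-trivial one-parameter subgroup $\lambda$ of $\bG$. Applying Theorem \ref{th:ncm=np} recasts numerical K-stability as properness of the Mabuchi functional $\bF$ along every such $\lambda$.

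Next, I would invoke the main result of \cite{paultian04}: properness of $\bF$ along a one-parameter subgroup $\lambda$ is equivalent to positivity of the associated Futaki invariant. By the footnote attached to Theorem \ref{th:ncm=np}, this Futaki invariant can be taken to be $w_\lambda(\Delta(M)) - w_\lambda(R(M))$ (with appropriate sign convention), and it agrees with the invariant introduced by Futaki and generalized by Ding-Tian and Donaldson whenever the central fibre $\lim_{t\to 0}\lambda(t)\cdot M$ is reduced. Since K-stability of $(M,L)$ is by definition positivity of the Futaki invariant on every non-trivial one-parameter subgroup (test configuration), the corollary follows by composing these equivalences.

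The only point requiring care, and which I would flag as the main obstacle, is purely bookkeeping: one must verify that the class of one-parameter subgroups appearing in the Hilbert-Mumford-Paul criterion matches the class of non-trivial test configurations used in the standard definition of K-stability, and that the sign conventions relating the weight pairing $w_\lambda$ to the classically normalized Futaki invariant are compatible. The reducedness proviso in the footnote is what makes this identification non-trivial; outside that locus one is simply taking the algebraic definition of the Futaki invariant to be $w_\lambda(\Delta(M))-w_\lambda(R(M))$. No new analytic input is needed beyond the cited results of Paul and Paul-Tian.
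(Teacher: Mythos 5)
Your proposal follows essentially the same route as the paper: the text preceding the corollary composes Theorem \ref{th:cm=p}, the Hilbert--Mumford--Paul criterion, and Theorem \ref{th:ncm=np} to reduce CM-stability to properness of $\bF$ along one-parameter subgroups, and then invokes \cite{paultian04} together with the footnoted identification of the Futaki invariant with $\bw_\lambda(\Delta(M))-\bw_\lambda(R(M))$. Your added caveat about matching one-parameter subgroups to test configurations and the reducedness proviso is consistent with the paper's own footnote, so no gap.
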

\begin{rema}
One should be able to give a direct proof of this without going through $(R(M), \Delta(M))$ by using the decomposition $\bG = \bK \bT\bK$.
The process resembles what we did in proving the Hilbert-Mumford-Paul criterion.
\end{rema}

The following corollary completes an approach suggested in \cite{tian10} and used in \cite{tian12}.
\begin{coro}\label{coro:tian12}
If $M$ is a K-stable Fano manifold and $L = K_M^{-\ell}$, then for any sequence $\{\sigma_i\}\subset \bG$ such that $M_\infty = \lim \sigma_i(M)$ is normal,
$\bF(\sigma_i)$ diverges to $+\infty$.
\end{coro}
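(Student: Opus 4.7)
The plan is to argue by contradiction, combining the equivalence K-stable $\iff$ CM-stable (Corollary \ref{coro:cm=p}) with a Cartan-decomposition reduction modeled on the proof of Theorem \ref{th:HMP}. Since $\bF$ is continuous on $\bG$, a bounded subsequence of $\sigma_i$ would make $\bF(\sigma_i)$ converge to a finite value, so we tacitly restrict to the interesting case that $\sigma_i$ is unbounded in $\bG$.

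Suppose for contradiction that, after passing to a subsequence, $\bF(\sigma_i) \leq C$. By Paul's formula \eqref{eq:F-1} this gives $p_{R(M),\Delta(M)}(\sigma_i) \leq C'$, and by Lemma \ref{lemm:distance} the points $\sigma_i[R(M),\Delta(M)]$ remain at Fubini-Study distance strictly below $\pi/2$ from $\sigma_i[R(M),0]$. Write $\sigma_i = k_i \tau_i k_i'$ with $k_i, k_i' \in \bK$ and $\tau_i \in \bT$, and pass to a further subsequence so that $k_i \to k$ and $k_i' \to k'$. Since $\bK$ acts by bounded distortions on the Hermitian norms of the ambient representations, the boundedness of $p_{R(M),\Delta(M)}(\sigma_i)$ transfers, up to an additive constant, to $p_{R(M'),\Delta(M')}(\tau_i)$, where $M' = k'(M)$.

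Because $\tau_i$ is unbounded, the arguments of Lemma \ref{lemm:richardson} produce a one-parameter subgroup $\lambda \colon \CC^* \to \bT$ and a sequence $t_i \to 0$ for which the $\lambda$-weights of $\tau_i$ on the finitely many weight spaces containing $R(M')$, $\Delta(M')$ and $I^q$ are asymptotically those of $\lambda(t_i)$, and moreover $\lim_{t \to 0} \lambda(t) M' = k^{-1} M_\infty$. The latter is normal by hypothesis and, since $\sigma_i$ is unbounded, is a non-trivial degeneration of $M'$; consequently $p_{R(M'),I^q}(\lambda(t)) \to +\infty$, i.e.\ $\deg(\bV)\, w_\lambda(I) < w_\lambda(R(M'))$. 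Since K-stability is $\bG$-invariant, $(R(M'),\Delta(M'))$ is K-stable, and Theorem \ref{th:HMP-2} then forces $w_\lambda(\Delta(M')) < w_\lambda(R(M'))$, so that $p_{R(M'),\Delta(M')}(\lambda(t_i)) \to +\infty$. This contradicts the boundedness inherited by $\tau_i$ in the previous paragraph, completing the proof.

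The main obstacle is the extraction step: producing a one-parameter subgroup $\lambda$ of $\bT$ whose weight-theoretic and geometric behaviors genuinely track the sequence $\tau_i$ along the given degeneration of $M'$. Normality of $M_\infty$ is essential here, because it guarantees that $\lim_{t \to 0} \lambda(t) M'$ is a reduced variety isomorphic to $k^{-1} M_\infty$ rather than a more degenerate scheme, so that the conditional hypothesis of Theorem \ref{th:HMP-2} is unambiguously satisfied and the numerical K-stability conclusion is a strict inequality.
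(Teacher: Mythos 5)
Your strategy---run the Cartan decomposition and Richardson's lemma again from scratch and then invoke Theorem \ref{th:HMP-2}---is not how the paper obtains this corollary: there it is meant to follow directly from Corollary \ref{coro:cm=p} (K-stability gives CM-stability, hence $\bF(\sigma_i)\to\infty$ whenever $\bJ(\sigma_i)\to\infty$), with the normality of $M_\infty$ serving only to deal with the remaining case in which $\bJ(\sigma_i)$ stays bounded. More importantly, each of the three key steps of your argument has a genuine gap.

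First, the ``transfer'' step fails as written. Writing $\sigma_i=k_i\tau_i k_i'$ with $\bK$ unitary gives boundedness of $p_{R(k_i'M),\Delta(k_i'M)}(\tau_i)$ with $k_i'$ \emph{varying}; to replace $k_i'$ by its limit $k'$ you would need $\log||\tau_i(k_i'w)||-\log||\tau_i(k'w)||$ to stay bounded, which is false in general when $\tau_i$ is unbounded (the ratio $||\tau_i u||/||\tau_i u'||$ can blow up even as $u\to u'$). This is precisely the difficulty that the partition-of-unity Claim in the proof of Theorem \ref{th:HMP} exists to overcome, and you have skipped it. Second, Lemma \ref{lemm:richardson} only produces a one-parameter subgroup $\lambda$ of $\bT$ converging to \emph{some} point of a prescribed closed $\bT$-invariant subset of $\overline{\bT x}\setminus \bT x$; it does not assert that the weights of $\tau_i$ are ``asymptotically those of $\lambda(t_i)$'', nor that $\lim_{t\to 0}\lambda(t)M'=k^{-1}M_\infty$. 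You flag this yourself as ``the main obstacle'', but flagging it does not close it: without such tracking, boundedness of $p_{R(M'),\Delta(M')}$ along $\tau_i$ gives no control of $p_{R(M'),\Delta(M')}$ along $\lambda(t)$, and the final contradiction evaporates. Third, the one place where normality is supposed to do real work---the claim that a nontrivial normal degeneration forces $\deg(\bV)\,w_\lambda(I)<w_\lambda(R(M'))$, i.e.\ $p_{R(M'),I^q}(\lambda(t))\to+\infty$---is asserted with no argument. By Lemma \ref{lemm:J-1} one always has $\deg(\bV)\,w_\lambda(I)\le w_\lambda(R(M'))$, so what must be excluded is the case of equality, and ``the degeneration is nontrivial and normal'' is not by itself a reason; yet this inequality is exactly the hypothesis of Theorem \ref{th:HMP-2} on which your whole contradiction rests. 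Until these three steps are supplied, the proof is incomplete.
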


\begin{rema}
\label{rema:tian12}
There is another functional in the study of K\"ahler-Einstein metrics on Fano manifolds:
$$F_{\omega_0}(\varphi)\,=\,\bJ_{\omega_0}(\varphi)\,-\,\int_M \varphi \,\omega_0^n \,-\,\log \left (\frac{1}{V} \int_M e^{h_0 - \varphi} \,\omega_0^n\right ),$$
where $V=\int_M \omega_0^n$ and $h_0$ is defined by
$${\rm Ric}(\omega_0) \,-\, \omega_0\,=\,\sqrt{-1}\,\partial\bar\partial \,h_0,~~~\int_M e^{h_0}\,\omega_0\,=\, V.$$
This $F_{\omega_0}$ has at most one critical point, i.e., the K\"ahler-Einstein on $M$ if it exists, and plays a similar role as the K-energy $\nu_{\omega_0}$ does.
One can prove an analogue of Corollary \ref{coro:tian12} for $F(\sigma)$ on $\bG$ by similar arguments, where $F(\sigma)=F_{\omega_0}(\varphi_\sigma)$.

In fact, in deriving the existence of K\"ahler-Einstein metrics from the K-stability, $\bF(\sigma_i)$ and $F(\sigma_i)$ are equivalent along
involved sequence $\{\sigma_i\}\subset \bG$.
\end{rema}

\section{A final remark}

Here we mention an interesting problem on stable pairs.
We consider the structure of the set of all semi-stable pairs. Let
$\bV$ and $\bW$ be two $\bG$-representations, we say $(v',w') \prec (v,w)$ if $(v',w')$ is contained in the closure
of the orbit $\bG [v,w]$ in $\mathbb{P}(\bV\oplus\bW)$. Clearly, $(v,0)\prec (v,w)$ if and only if $(v,w)$ is not semi-stable.
For any $(v,w)\in \mathbb{P}(\bV\oplus\bW)\backslash \mathbb{P}(\{0\}\oplus\bW)$, we denote by $\{v,w\}$ the set of all $(v',w')$ such that either $(v',w') \prec (v,w)$ or
$(v,w) \prec (v',w')$. It follows from standard theory on group actions that there is a unique $(\bar v,\bar w) \in \{v,w\}$ whose orbit $\bG[\bar v,\bar w]$
is closed, so it precedes anyone in $\{(v,w)\}$.  We expect

\begin{conj}
\label{conj:moduli}
Let $\cM$ be the set of all $\{(v,w)\}$ with $(v,w)$ being a semi-stable pair. Then $\cM$ is a quasi-projective
variety.
\end{conj}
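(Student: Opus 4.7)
My plan is to construct $\cM$ as a quasi-projective variety via GIT, using the Hilbert-Mumford-Paul criterion (Theorem \ref{th:HMP}) to bridge Paul's semistability and classical GIT. Numerically, $(v,w)$ is Paul-semistable iff $w_\lambda(w)\le w_\lambda(v)$ for every one-parameter subgroup $\lambda$ of $\bG$, which by a direct Mumford $\mu$-weight computation on $\mathbb{P}(\bV)\times\mathbb{P}(\bW)$ is exactly the semistability condition for the non-ample linearization $\cO(-1,1)$, since $\mu^{\cO(-1,1)}(\lambda,([v],[w]))=w_\lambda(w)-w_\lambda(v)$. The principal difficulty is the non-ampleness: $H^0(\mathbb{P}(\bV)\times\mathbb{P}(\bW),\cO(-k,k))=0$ for every $k\ge 1$, so the naive Proj-of-invariants construction returns nothing, and I must pass to a $\bG$-equivariant birational model on which the formal linearization becomes genuinely ample.

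The natural candidate is the blow-up $\tilde Y=\mathrm{Bl}_{\mathbb{P}(\bV)}\mathbb{P}(\bV\oplus\bW)$ along the $\bG$-invariant subvariety $\mathbb{P}(\bV\oplus\{0\})$, with exceptional divisor $E$ and blow-down $\pi$. The normal bundle identification $N_{\mathbb{P}(\bV)/\mathbb{P}(\bV\oplus\bW)}\cong\bW\otimes\cO_{\mathbb{P}(\bV)}(1)$ trivializes $E$ as $\mathbb{P}(\bV)\times\mathbb{P}(\bW)$. For $L_{k,m}=\pi^*\cO(k)\otimes\cO(-mE)$ (ample for $k\gg m>0$), a weight computation at $\tilde y\in\tilde Y$ lying over $[v,w]\notin\mathbb{P}(\bV)$ yields
\[
\mu^{L_{k,m}}(\lambda,\tilde y)=k\min(w_\lambda(v),w_\lambda(w))+m\max\bigl(0,w_\lambda(w)-w_\lambda(v)\bigr).
\]
The correction term $m\max(0,w_\lambda(w)-w_\lambda(v))$ captures precisely the Paul-destabilizing directions. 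After possibly iterating the blow-up (for instance, also blowing up $\mathbb{P}(\{0\}\oplus\bW)$ and the strict transforms of classically unstable strata), one should obtain an ample $\bG$-linearization on the iterated blow-up whose Mumford semistability matches Paul's. Applying Mumford's GIT then produces a projective quotient, inside which $\cM$ embeds as an open quasi-projective subvariety — its complement being the image of boundary strata arising from the blow-up, and the orbit-closure equivalence $\{(v,w)\}$ in the conjecture corresponding to Mumford's closed-orbit equivalence on the blow-up.

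The principal obstacle is engineering the blow-up linearization so that GIT-semistability recovers Paul's condition without spurious restrictions. Concretely: Paul-semistability does not imply classical semistability on $\mathbb{P}(\bV\oplus\bW)$ (take $w_\lambda(v)=-10$ and $w_\lambda(w)=0$), so the classical term $k\min(w_\lambda(v),w_\lambda(w))$ in the weight formula can destabilize points that are Paul-semistable, conflicting with the Seshadri constraint $k\gg m$. Controlling this requires either an additional twist by a character-like correction (possible by enlarging $\bG$ to $\bG\times\mathbb{G}_m$ with an auxiliary scaling), or a Kirwan-style partial desingularization that iteratively resolves the locus where Paul's and classical semistability disagree. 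A fallback is to construct $\cM$ via the Kempf-Ness correspondence of Theorem \ref{th:properness} as the $\bK$-quotient of the critical set of $p_{v,w}$, yielding a topological space, and then to equip it with an algebraic structure; but verifying quasi-projectivity through this analytic route demands substantial additional input. Once either strategy is pushed through, quasi-projectivity of $\cM$ becomes a direct consequence of Mumford-Kirwan GIT.
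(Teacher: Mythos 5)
The statement you are addressing is Conjecture \ref{conj:moduli}, which the paper leaves open: it supplies no proof, only the observation that the case of trivial $\bV$ reduces to classical Geometric Invariant Theory. So there is no argument in the paper to compare yours against, and the real question is whether your proposal closes the conjecture. It does not. What you have written is a program, and the decisive step is exactly where it stalls. Your own weight computation on the blow-up, $\mu^{L_{k,m}}(\lambda,\tilde y)=k\min(w_\lambda(v),w_\lambda(w))+m\max(0,w_\lambda(w)-w_\lambda(v))$, shows that for $k\gg m$ (forced by ampleness of $L_{k,m}$) the dominant term is the classical weight $k\,w_\lambda((v,w))$, so Mumford semistability on $\tilde Y$ imposes the classical condition on $\mathbb{P}(\bV\oplus\bW)$ \emph{in addition to} Paul's condition $w_\lambda(w)\le w_\lambda(v)$. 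As you note with the example $w_\lambda(v)=-10$, $w_\lambda(w)=0$, these genuinely disagree, and no choice of $k,m$ in the ample cone removes the spurious classical constraint. The proposed fixes (an auxiliary $\mathbb{G}_m$-twist, Kirwan partial desingularization, or the Kempf--Ness route through Theorem \ref{th:properness}) are named but not carried out, and each would itself require proving that the resulting quotient separates exactly the equivalence classes $\{(v,w)\}$ defined by orbit-closure in $\mathbb{P}(\bV\oplus\bW)$ -- a relation that does not obviously transport to closed-orbit equivalence on a blow-up or on $\mathbb{P}(\bV)\times\mathbb{P}(\bW)$.

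A second, quieter gap: even granting a birational model with the right semistable locus, quasi-projectivity of $\cM$ in classical GIT comes from the finite generation of the invariant ring of an \emph{ample} linearization, which simultaneously produces the variety structure and identifies its points with closed-orbit equivalence classes. Paul semistability is defined by the separation of two orbit closures, $\overline{\bG[v,w]}\cap\overline{\bG[v,0]}=\emptyset$, and it is not established in your sketch that this locus is even Zariski-open in $\mathbb{P}(\bV\oplus\bW)\setminus\mathbb{P}(\{0\}\oplus\bW)$, let alone that it is the non-vanishing locus of invariant sections of some line bundle on a model. Your reduction to the numerical criterion via Theorem \ref{th:HMP} is the right first move, and the blow-up heuristic is a sensible place to look, but as written the proposal leaves the conjecture where the paper left it.
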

Of course, this is true if $\bV$ is a trivial representation since it then becomes the situation in classical Geometric Invariant Theory.

%\section{Constructing moduli of semistable pairs}

%In this section, we assume that $\bV$ and $\bW$ are two $\bG$-representations as in Section 1.

%\section{Symplectic analogue of stable pairs}

\end{document}